\renewcommand{\@seccntformat}[1]{{\csname the#1\endcsname}{\normalsize .}\hspace{.5em}}
\def \[{\begin{equation}}
\def \]{\end{equation}}
\newtheorem{thm}{Theorem}[section]
\newtheorem{case}{Case}
\newtheorem{lem}[thm]{Lemma}
\newtheorem{cor}[thm]{Corollary}
\begin{document}

\setlength{\baselineskip}{15pt}
\begin{center}{\Large \bf Extremal trees of given degree sequence or segment sequence with\\ respect to
Steiner 3-eccentricity}
\vspace{4mm}

{\large Xin Liu\footnote{Corresponding author. \\
\hspace*{5mm}{\it Email addresses}: liuxin0622@mails.ccnu.edu.cn (X. Liu).}}\vspace{2mm}

Faculty of Mathematics and Statistics,  Central China Normal
University, Wuhan 430079, P.R. China
\end{center}

\noindent {\bf Abstract}:\ The Steiner $k$-eccentricity of a vertex in graph $G$ is the maximum Steiner distance over all $k$-subsets containing the vertex. %Some general properties of the Steiner 3-eccentricity of trees are given.
Let $\mathbb{T}_n$ be the set of all $n$-vertex trees, $\mathbb{T}_{n,\Delta}$ be the set of $n$-vertex trees with given maximum degree equal to $\Delta$, $\mathbb{T}_n^k$ be the set of $n$-vertex trees with exactly $k$ vertices of maximum degree, and let $\mathbb{T}_{n,\Delta}^k$ be the set of $n$-vertex trees with exactly $k$ vertices of given maximum degree equal to $\Delta.$
In this paper, we first determine the sharp upper bound on the average Steiner 3-eccentricity of $n$-vertex trees with given degree sequence. The corresponding extremal graph is characterized. Consequently, together with majorization theory, the unique graph among $\mathbb{T}_n$ (resp. $\mathbb{T}_{n,\Delta}$, $\mathbb{T}_n^k, \mathbb{T}_{n,\Delta}^k$) having the maximum average Steiner 3-eccentricity is identified. Then we characterize the unique $n$-vertex tree with given segment sequence having the largest average Steiner 3-eccentricity. Similarly, the $n$-vertex tree with given number of segments having the largest average Steiner 3-eccentricity is determined.

\vspace{2mm} \noindent{\it Keywords:}
Steiner eccentricity;  Degree sequence; Segment sequence;  Majorization

\vspace{2mm}

\noindent{AMS subject classification:} 05C35,\ 05C12

\section{\normalsize Introduction}\setcounter{equation}{0}
We begin by describing the details about background which will contribute to the main results. Some well-known preliminaries will be given conveniently.
\subsection{\normalsize Background}\setcounter{equation}{0}
In this paper, we consider only simple, finite and undirected graphs. Let $G = (V(G), E(G))$ be a graph with vertex set $V(G)$ and edge set $E(G)$. The number of vertices $|V(G)|$ and edges $|E(G)|$ in a graph are called the \textit{order} and \textit{size} of $G,$ respectively. Then $G-v,\, G-uv$ denote the graph obtained from $G$ by deleting vertex $v\in V(G),$ or edge $uv\in E(G),$ respectively (this notation is naturally extended if more than one vertex or edge is deleted). Similarly, $G+uv$ denotes the graph obtained from $G$ by adding an edge between $u$ and $v$ which are nonadjacent in $G$. As usual, let $P_n$ and $S_n$ denote the path and star graph on $n$ vertices, respectively. A tree $T = (V, E)$ is a connected, acyclic graph.

We refer to the vertex of degree 1 as a \textit{leaf} or a \textit{pendent vertex}. If it is of degree at least 2, then it is called an \textit{internal vertex}, and if it is of degree at least 3, then it is called an \textit{branch vertex}. An edge $e$ in a graph is a pendent edge if it is incident to a pendent vertex. A path $P$ is a pendent path if one endpoint of path has degree 1 and each internal vertex of $P$ has degree 2.

For any vertex $v \in V(G)$, let $d_G(v)$ ($d(v)$ for short) denote the \textit{degree} of $v$, i.e. the number of edges incident to $v$. The \textit{degree sequence} is the sequence of degree in descending order of vertices. %On the other hand, a non-increasing sequence of non-negative integers $\pi = (d_1,d_2,\dots,d_n)$ is called graphic degree sequence if there exists a simple graph whose degree sequence is $\pi$. It is well known that a non-increasing sequence $d_1,d_2,\dots,d_n$ of non-negative integers is a tree graphic degree sequence if and only if $\sum_{i=1}^{n} d_{i} = 2(n-1)$.%
Specially, we know that if $T$ is a tree of order $n$ and $\pi=(d_1,\dots,d_n)$ is its degree sequence, then $\sum_{i=1}^{n}d_i=2(n-1).$
A \textit{segment} in a tree $T$ is a path, where each inner vertex has degree 2, and none of the ends has degree 2. A segment is pendent if it has a pendent edge. The length sequence in descending order of all segments $l=(l_1,l_2,\dots,l_m)$ in $T$ is called \textit{segment sequence}. If $T$ is of order $n$, then $\sum_{i=1}^{m}l_i=n-1$.

One of the most basic concepts in graphs is distance. If $G$ is a connected graph and $u,v$ are two vertices of $G$, then the distance $d_{G}(u, v)$ ($d(u,v)$ for short) between $u$ and $v$ is the length of a shortest path $P_{G}(u, v)$ ($P_{uv}$ for short) connecting $u$ and $v$. The \textit{eccentricity} of a vertex $u$, written $\varepsilon_{G}(u)$, is $\max_{v\in V(G)}d(u, v)$. Denote by $diam(G)$ the \textit{diameter} of $G$, which is equal to $\max_{v\in V(G)}\varepsilon(v)$ and $rad(G)$ the \textit{radius} of $G$, which is equal to $\min_{v\in V(G)}\varepsilon(v)$. We call $P_{G}(u, v)$ a \textit{diametrical path} of $G$ if $d_{G}(u, v)= diam(G)$.

We give some notations which appear in \cite{2} and \cite{1}. Let $G$ be a connected graph of order at least 2 and let $S$ be a nonempty set of vertices of $G$. Then the \textit{Steiner distance} $d_{G}(S)$ among the vertices of $S$ (or simply the distance of $S$) is the minimum size among all connected subgraphs whose vertex sets contain $S$, that is
$$d_{G}(S) = \min\{|E(T)|:\hbox{$T$ is a connected subgraph of $G$ with $S\subseteq V(T)$}\}.$$
Note that if $T$ is a connected subgraph of $G$ such that $S\subseteq V(T)$ and $|E(T)| = d(S)$, then $T$ is a tree. Such a tree has been referred to as a \textit{Steiner tree}. Further, if $S = \{u, v\}$, then $d(S) = d(u, v)$; while if $|S| = n$, then $d(S)= n-1$. If $S \neq \emptyset$, then $d(S) \geqslant 0$. Further, $d(S) = 0$ if and only if $|S| = 1$. It's also easy to see that if $T$ is a tree and $S\subseteq V(T)$, then the $S$-Steiner tree is unique.

If $k\geq2$ is an integer and $v\in V(G)$, then the \textit{Steiner k-eccentricity} $ecc_k(v,G)$ of $v$ in $G$ is the maximum Steiner distance over all k-subsets of $V(G)$ which contain $v$, that is
$$ecc_k(v,G) = \max\{d_{G}(S):v\in S\subseteq V(G),|S| = k\}.$$
Especially, $ecc_2(v,G)$ is the standard eccentricity of the vertex $v$.
The \textit{average Steiner k-eccentricity} $aecc_k(G)$ of $G$ is the mean value of all vertices' Steiner $k$-eccentricities in $G$, that is
$$aecc_k(G) = \frac{1}{|V(G)|}\sum_{v\in V(G)}ecc_k(v,G)$$
Especially, $aecc_2(G)$ is the standard average eccentricity of $G$.
A $k$-set $S\subseteq V(G)$ is a \textit{Steiner $k$-ecc $v$-set} (or \textit{$k$-ecc v-set} for short) if $v\in S$ and $d_{G}(S) = ecc_k(v,G)$; a corresponding tree that realizes $ecc_k(v,G)$ will be called a \textit{Steiner $k$-ecc $v$-tree} (or \textit{$k$-ecc $v$-tree} for short). A vertex $v$ may have more than one $k$-ecc $v$-set, and each such set may have more than one Steiner $k$-ecc $v$-tree.

It is well-known that the Steiner tree problem on general graphs is NP-hard to solve (see \cite{3,4}), however it can be solved in polynomial time on trees \cite{5}. The Steiner distance attracts much attention, which was extensively studied on trees, joins, standard graph
products, corona products, and some others, see \cite{a1,2,a3,a4,a5}. Some extremal problems on the average Steiner $k$-distance and the $k$th
Steiner Wiener index have been studied among trees (resp. complete graphs, paths, cycles, complete bipartite graphs, and others); see \cite{b1,b2}. Some work on the average Steiner distance and the Steiner Wiener index is given in \cite{c1,c2,c3,c4}. On the other hand, the Steiner (revised) Szeged index in \cite{d6}, Steiner degree distance in \cite{d2}, multi-center Wiener index in \cite{d4}, Steiner Harary index in \cite{d5}, Steiner Gutman index in \cite{d1} and Steiner hyper-Wiener index in \cite{d3} are topological indices related to the Steiner distance. For more advances on the Steiner distance, one may be referred to the nice survey paper \cite{d7}.

Very recently,  Li, Yu and Klav\v{z}ar \cite{1} first studied the average Steiner 3-eccentricity index of trees: Some general properties of the Steiner 3-eccentricity of tree are given; tree transformation which does not increase the average Steiner 3-eccentricity is presented. Consequently, some lower and/or upper bounds for the average Steiner 3-eccentricity of trees with some given parameters are established. Li and Yu \cite{11} studied the average Steiner 3-eccentricity index of block graphs: Two graph transformations are given on block graphs. Based
on these transformations, the lower and upper bounds on the average Steiner 3-eccentricity index of block graphs with a fixed block order sequence are established.

Motivated by \cite{10a,11,1}, It is natural for us to further
this study by considering the average Steiner 3-eccentricity index of trees with some given parameters. In this paper we first give a tree transformation which increases the average Steiner 3-eccentricity index of trees. We then establish the sharp upper bound on the average Steiner 3-eccentricity index of trees with given degree sequence. At last we identify the graph among $n$-vertex trees with given segment sequence (resp. with given number of segment sequence) having the smallest average Steiner 3-eccentricity index.
\subsection{\normalsize Main results}\setcounter{equation}{0}

\begin{figure}
\centering
\includegraphics[width=100mm]{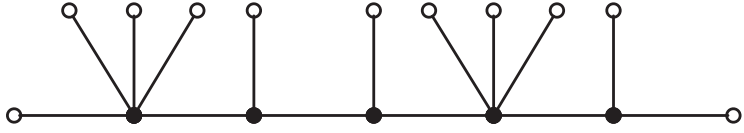} \\
    \caption{A caterpillar with given degree sequence (5,5,3,3,3,1,1,\dots,1).}
\label{fig1}
\end{figure}
A \textit{caterpillar} is a tree with the property that a path remains if all leaves are deleted. The resulting path is called a \textit{spine}. Fig.~\ref{fig1} shows a caterpillar with given degree sequence (5,5,3,3,3,1,1,\ldots,1). Let $\mathcal{T}_{n,\pi}$ be the set of $n$-vertex trees each of which has degree sequence $\pi = (d_1,\dots,d_n)$, where $n>2$. Let $\mathcal{C}_{n,\pi}^*$ be the set of $n$-vertex caterpillars each of which has degree sequence $\pi$.

Following the above notation our first main result characterizes the $n$-vertex tree having the maximum average Steiner 3-eccentricity among $\mathcal{T}_{n,\pi}$.
\begin{thm}\label{thm2.1}
Let $T$ be in $\mathcal{T}_{n,\pi},$ where $\pi = (d_1,\dots,d_n)$ with $d_1\geqslant \cdots \geqslant d_k\geqslant 2>d_{k+1}=\cdots =d_n=1.$ Then
\begin{align}\label{eq000}
\begin{split}
aecc_{3}(T) &\leqslant \left\{
                       \begin{array}{ll}
                         \frac{nk+2n-k}{n}, & \hbox{if $d_1\geqslant 3;$} \\
                         n-1, & \hbox{if $d_1 = 2$.}
                       \end{array}
                     \right.
\end{split}
\end{align}
with equality if and only if $T \in \mathcal{C}_{n,\pi}^*$.
\end{thm}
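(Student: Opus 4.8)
The plan is to prove the bound vertex by vertex and then sum, avoiding any transformation. Fix $T\in\mathcal{T}_{n,\pi}$ with $d_1\ge 3$, so that $T$ has exactly $k\ge 1$ internal vertices and $n-k\ge 3$ leaves. The structural observation I would isolate first is this: for any $3$-set $S=\{v,a,b\}$ the (unique) Steiner tree $U$ is a subtree of $T$ whose leaf set is contained in $\{v,a,b\}$, and whose every non-leaf vertex has degree at least $2$ in $U$ and hence is an internal vertex of $T$. Writing $\ell(U)$ and $i(U)$ for the numbers of leaves and of internal vertices of $U$, this gives $d_T(S)=|E(U)|=\ell(U)+i(U)-1\le 3+k-1=k+2$, so $ecc_3(v,T)\le k+2$ for every $v$. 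For an internal vertex $v$ I would sharpen this to $ecc_3(v,T)\le k+1$ by a short case split: if $v$ is a leaf of $U$ then $v$ is an internal vertex of $T$ not counted in $i(U)$, forcing $i(U)\le k-1$; if $v$ is a non-leaf of $U$ then $U$ has only the two leaves $a,b$, so $\ell(U)\le 2$. Either way $|E(U)|\le k+1$. Summing over the $n-k$ leaves and $k$ internal vertices yields $\sum_v ecc_3(v,T)\le (n-k)(k+2)+k(k+1)=nk+2n-k$, i.e. $aecc_3(T)\le \frac{nk+2n-k}{n}$. The case $d_1=2$ is immediate, since then $T=P_n$ is the unique member of $\mathcal{T}_{n,\pi}$ and every $ecc_3(v,P_n)=n-1$.

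For the sufficiency direction of the equality claim I would show that a caterpillar $T\in\mathcal{C}_{n,\pi}^*$ attains every one of these per-vertex bounds, and that the resulting value is independent of how the degrees are arranged along the spine. Let $s_1,\dots,s_k$ be the spine; since each $d_i\ge 2$, both ends $s_1,s_k$ carry a pendent leaf and there are at least three leaves overall. For an internal vertex $s_j$ I would exhibit a witness by taking one extremal leaf at each end of the spine (or, for $j\in\{1,k\}$, a leaf at $s_j$'s own end together with the opposite extremal leaf): the corresponding Steiner tree runs along the whole spine and has exactly $k+1$ edges. For a leaf $v$ attached at $s_j$ I would take two further leaves whose attachment points are $s_1$ and $s_k$ (using the third leaf to handle the case where $v$ itself sits at an end), producing a Steiner tree equal to the spine plus three pendent edges, of size $k+2$. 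Because these witnesses use only that the spine is a path and that leaves exist at both ends and in triplicate, the total $k(k+1)+(n-k)(k+2)=nk+2n-k$ is the same for every caterpillar with degree sequence $\pi$.

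The necessity direction is where the real content lies, and it is the step I expect to be the main obstacle. Equality in the summed bound forces \emph{every} internal vertex to attain $k+1$ and every leaf to attain $k+2$, so it suffices to produce, in any non-caterpillar, a single internal vertex falling short. Here I would use that the subgraph $T'$ induced by the internal vertices of $T$ is a subtree, and that $T$ is a caterpillar if and only if $T'$ is a path. If $T$ is not a caterpillar, $T'$ has a vertex $w$ of degree at least $3$ in $T'$, i.e. $w$ has at least three internal neighbours in $T$, lying in three distinct components of $T-w$. Any Steiner tree $U$ through $w$ satisfies $\deg_U(w)\le 2$, so it meets at most two of these components and omits at least one internal vertex entirely; rerunning the refined counting argument then gives $ecc_3(w,T)\le k<k+1$, hence a strict deficit and $aecc_3(T)<\frac{nk+2n-k}{n}$. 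The delicate points I anticipate are carrying out the two sub-cases of the sharpened internal bound cleanly (so that an omitted internal vertex is correctly converted into the loss of at least one edge), and verifying that the caterpillar witnesses above genuinely exist at the spine ends and under an uneven distribution of leaves; these are precisely the places where the hypothesis $d_1\ge 3$, which guarantees at least three leaves, is needed.
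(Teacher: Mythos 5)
Your proposal is correct, and it proves the theorem by a genuinely different and more elementary route than the paper. The paper's proof is global: it repeatedly applies the $\sigma$-transformation, whose strict monotonicity (Theorem \ref{lem3.1}, established via Lemma \ref{lem2.2} through a seven-case analysis) pushes any non-caterpillar to a caterpillar with strictly larger $aecc_3$, and then evaluates caterpillars directly. You instead argue locally by counting: the Steiner tree $U$ of a $3$-set has at most three leaves, all lying in the set, and all of its non-leaf vertices are internal in $T$, so $|E(U)|=\ell(U)+i(U)-1\leqslant k+2$, sharpened to $k+1$ when the prescribed vertex is internal; summing over the $k$ internal vertices and $n-k$ leaves gives the bound. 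Caterpillars attain it because explicit witness triples exist as soon as there are at least three leaves (which is exactly what $d_1\geqslant 3$ guarantees), and a non-caterpillar loses at least one unit at any vertex $w$ of degree at least $3$ in the subtree induced by the internal vertices, since $\deg_U(w)\leqslant 2$ in every Steiner tree of a triple through $w$ forces $U$ to miss an internal neighbour of $w$. The delicate points you flag do go through: if $w$ is a leaf of $U$ then $U$ misses at least two such neighbours and $|E(U)|\leqslant 3+(k-3)-1=k-1$, while if $w$ is internal to $U$ then $\ell(U)\leqslant 2$ and $i(U)\leqslant k-1$ give $|E(U)|\leqslant k$; and the witnesses exist because each spine end has degree at least $2$ in $T$ and therefore carries a pendent leaf. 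What your approach buys is self-containedness (no transformation machinery, no imported Lemma \ref{lem2.2}), a structural explanation of extremality (every vertex of a caterpillar individually attains its ceiling), and a cleanly localized source of strictness; what the paper's route buys is the $\sigma$-transformation itself as a reusable monotonicity tool of independent interest.
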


A \textit{generalized star} with segment sequence $l=(l_1,l_2,\dots,l_m)$, denoted by $S(l_1,l_2,\dots,l_m)$, is a tree with exactly one branch vertex obtained by identifying one end of each of the $m$ segments. The unique branch vertex is called \textit{center}. The generalized star $S(l_1,l_2,\dots,l_m)$ is \textit{balanced} if $l_1-l_m\leqslant 1$. We denote the unique $n$-vertex balanced generalized star with $m$ segments by $ST_{n,m}$. Let $\mathscr{T}_n^l$ denote the set of $n$-vertex trees each of which has segment sequence $l=(l_1,l_2,\dots,l_m)$, where $l_1\geqslant l_2\geqslant \cdots \geqslant l_m$. Let $\mathscr{T}_{n,m}$ be the set of $n$-vertex trees each of which has $m$ segments.

Following the above notation our next main result identifies the tree among $\mathscr{T}_n^l$ having the minimum average Steiner 3-eccentricity.
\begin{thm}\label{thm2.7}
The generalized star with segment sequence $l=(l_1,\dots,l_m)$ is the unique tree which minimizes the average Steiner 3-eccentricity in $\mathscr{T}_n^l$.
\end{thm}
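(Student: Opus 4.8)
The plan is to reduce an arbitrary member of $\mathscr{T}_n^l$ to the generalized star by a \emph{length-preserving} transformation that strictly lowers the average Steiner $3$-eccentricity, and to drive the whole argument with the elementary identity $d_T(\{v,x,y\})=\tfrac12\bigl(d(v,x)+d(x,y)+d(y,v)\bigr)$, valid for any three vertices of a tree. Thus $ecc_3(v,T)=\max_{x,y}\tfrac12\bigl(d(v,x)+d(x,y)+d(y,v)\bigr)$, and, as in the general properties recorded in \cite{1}, this maximum is realized by taking $x,y$ at the ends of two paths leaving $v$ that diverge as early as possible; for a vertex of degree at least $2$ this is just the sum of the two largest branch depths at $v$, with the obvious modification at a leaf. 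First I would clear away the degenerate cases: no tree has exactly two segments, and for $m=1$ the class $\mathscr{T}_n^l$ is the single path $P_n=S(l_1)$; so I may assume $m\geq 3$, and then the center of $S(l_1,\dots,l_m)$ is a genuine branch vertex.

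Next I would pass to the reduced tree $R$ obtained from $T\in\mathscr{T}_n^l$ by suppressing all degree-$2$ vertices: its edges are exactly the $m$ segments, its leaves are the leaves of $T$, and its internal vertices are the branch vertices of $T$, so $T$ is the generalized star precisely when $R$ is a star. Since $T$ is not a generalized star exactly when it has at least two branch vertices, and since the internal vertices of a tree induce a subtree, I can always pick a \emph{peripheral} branch vertex $w$, namely a leaf of the subtree of $R$ spanned by the branch vertices. Such a $w$ is joined to a unique further branch vertex $u$ by a single segment $Q$ of length $q$, while its remaining $\deg(w)-1\geq 2$ segments are pendent legs $L_1,\dots,L_t$ of lengths $a_1,\dots,a_t$. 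The transformation $T\mapsto T'$ detaches $L_1,\dots,L_t$ from $w$ and reattaches them at $u$; then $\deg(w)=1$, so $Q$ becomes a pendent segment of length $q$. The multiset of segment lengths is visibly unchanged, so $T'\in\mathscr{T}_n^l$, while the number of branch vertices drops by exactly one; iterating therefore terminates at the unique tree of $\mathscr{T}_n^l$ with a single branch vertex, which is $S(l_1,\dots,l_m)$.

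The heart of the proof is the vertex-wise inequality $ecc_3(v,T')\leq ecc_3(v,T)$. Writing $A$ for the side of $u$ away from $Q$ and $e_A$ for the greatest distance from $u$ into $A$, I would group the vertices into (i) the moved legs, (ii) the interior of $Q$ together with $w$, and (iii) the vertices of $A$, and in each case observe that the only branch depth that changes is the one routing from $v$ through the segment $Q$: before the move, reaching the opposite side costs the full traversal length $q$ of $Q$, whereas afterwards the legs hang directly at $u$ and $Q$ is a pendent path, so every such depth loses exactly $q$ (for example a leg vertex at depth $s$ sees its long branch fall from $s+\max(\max_{i'\neq i}a_{i'},\,q+e_A)$ to $s+\max(\max_{i'\neq i}a_{i'},\,q,\,e_A)$). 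A short case analysis on which two branches are largest then gives $ecc_3(v,T')\leq ecc_3(v,T)$ for all $v$. I expect this to be the main obstacle: the distances between the moved legs and the interior of $Q$ actually change by $2j-q$ and so increase for $j>q/2$, and one must verify that no such rearrangement of which branch is longest can ever raise the sum of the \emph{two} largest branch depths.

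For strictness, note that an interior vertex $r$ of $Q$ (present whenever $q\geq 2$) satisfies $ecc_3(r,T)=q+e_A+\max_i a_i$ but $ecc_3(r,T')=q+\max(e_A,\max_i a_i)$, a strict drop since both $A$ and the legs have positive depth; when $q=1$ the moved leaves themselves strictly drop. Summing the vertex-wise inequalities therefore yields $aecc_3(T')<aecc_3(T)$ whenever $T$ has at least two branch vertices. Consequently every tree of $\mathscr{T}_n^l$ other than the generalized star can be strictly improved, so the minimizer is forced to be the unique tree with a single branch vertex, and $S(l_1,\dots,l_m)$ is the unique tree minimizing the average Steiner $3$-eccentricity in $\mathscr{T}_n^l$.
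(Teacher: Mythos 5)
Your overall strategy --- iteratively merge a branch vertex into an adjacent branch vertex along the segment joining them, preserving the segment sequence and reducing the number of branch vertices until the generalized star remains --- is exactly the paper's strategy (there it is the $\pi$-transformation of Section~1, applied to a segment whose ends are both branch vertices). But your version has a genuine gap: you fix the direction of the move by topology (always from a \emph{peripheral} branch vertex $w$ toward its unique branch-neighbour $u$), whereas the correctness of the move depends on a metric condition. The $\pi$-transformation, and the vertex-wise monotonicity you want, require that the attachments be moved \emph{from} the endpoint whose component of $T-E(Q)$ has the smaller eccentricity \emph{to} the endpoint whose component has the larger one ($\varepsilon_{T_u}(u)\geqslant\varepsilon_{T_w}(w)$ in the paper's notation). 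Without that hypothesis your central claim $ecc_3(v,T')\leqslant ecc_3(v,T)$ for all $v$ is simply false. Concretely: let the branch vertices be $c-u-w$ with unit segments $cu$ and $uw$, give $c$ two pendent leaves, $u$ one pendent leaf, and $w$ two pendent paths of length $10$. Then $w$ is peripheral, your move reattaches its two long legs at $u$, and by Lemma~1.4 one computes $ecc_3(w,T)=10+10=20$ while $ecc_3(w,T')=11+10=21$; for longer $Q$ the interior vertices of $Q$ nearest $u$ increase as well. (In this example the \emph{average} still happens to drop because the many vertices on the $u$-side each drop by $d(u,w)$, but your proof sums vertex-wise inequalities that do not hold, so it does not establish this.)

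Two subsidiary computations also need repair. First, for a vertex $v$ of degree at least $2$, $ecc_3(v,T)$ is \emph{not} the sum of the two largest branch depths at $v$: if $v$ has a shallow branch on one side and a deep vertex $z$ on the other side carrying two long branches, the maximum of $\tfrac12\bigl(d(v,x)+d(v,y)+d(x,y)\bigr)$ is attained with $x,y$ both beyond $z$; this is precisely why the paper routes everything through Lemma~1.4 (longest path from $v$ plus the eccentricity of that path). Second, your strictness claim $ecc_3(r,T)=q+e_A+\max_i a_i$ for $r$ interior to $Q$ fails when $w$ carries two long legs (then the two legs, not a leg paired with $A$, realize the maximum), and for such $r$ near $u$ the quantity can in fact increase under your move. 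The paper instead obtains strictness only at the final step, from a leaf on the absorbed side whose longest starting path strictly shortens. The fix is to adopt the eccentricity-oriented direction (and then either cite the monotonicity of the $\pi$-transformation or reprove it); your peripheral-vertex bookkeeping can stay, but only as a way of choosing \emph{which segment} to contract, not which way to move the legs.
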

Our last result characterizes the tree among $\mathscr{T}_{n,m}$ having the minimum average Steiner 3-eccentricity.
\begin{thm}\label{thm2.8}
The generalized star $ST_{n,m}$ minimizes the average Steiner 3-eccentricity in $\mathscr{T}_{n,m}$.
\end{thm}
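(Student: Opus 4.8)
The plan is to reduce Theorem~\ref{thm2.8} to an optimization over segment lengths of generalized stars and then solve it by a balancing argument. First I would observe that $\mathscr{T}_{n,m}=\bigcup_{l}\mathscr{T}_n^{l}$, where the union runs over all segment sequences $l=(l_1,\dots,l_m)$ with $l_1\geqslant\cdots\geqslant l_m\geqslant 1$ and $\sum_{i=1}^m l_i=n-1$ (recall $m\geqslant 3$, since a tree with a branch vertex has at least three pendent segments). By Theorem~\ref{thm2.7}, for each such $l$ the unique minimizer of $aecc_3$ over $\mathscr{T}_n^{l}$ is the generalized star $S(l_1,\dots,l_m)$. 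Hence
\[
\min_{T\in\mathscr{T}_{n,m}}aecc_3(T)=\min_{l}\ aecc_3\bigl(S(l_1,\dots,l_m)\bigr),
\]
so the global minimizer is a generalized star and it remains only to decide which segment sequence of length $m$ summing to $n-1$ is optimal.

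Next I would make $aecc_3(S(l))$ explicit. In the generalized star with center $c$ and legs $B_1,\dots,B_m$ of lengths $l_1\geqslant\cdots\geqslant l_m$, every Steiner tree on three vertices is determined by the two outermost directions it uses, so a short case analysis (median at $c$ versus median at $v$) shows that for a vertex $v\in B_i$ at distance $t$ from $c$ with $1\leqslant t\leqslant l_i$,
\[
ecc_3(v,S(l))=M_1^{(i)}+\max\{\,t+M_2^{(i)},\,l_i\,\},
\]
where $M_1^{(i)}\geqslant M_2^{(i)}$ are the two largest lengths among $\{l_j:j\neq i\}$, while $ecc_3(c,S(l))=l_1+l_2$. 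Summing $t$ from $1$ to $l_i$ over each leg and adding the center term writes $aecc_3(S(l))$ as an explicit symmetric, piecewise-quadratic function $F(l_1,\dots,l_m)$; since $n$ is fixed, minimizing $aecc_3$ is the same as minimizing $\sum_v ecc_3(v)$.

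Finally I would minimize $F$ by a discrete Schur-convexity (Robin--Hood) argument. The central lemma to establish is: if $l_p\geqslant l_q+2$, then replacing $(l_p,l_q)$ by $(l_p-1,l_q+1)$ strictly decreases $aecc_3$, i.e.\ the transfer making two legs more nearly equal strictly lowers the average Steiner $3$-eccentricity. Granting this, any non-balanced generalized star (one with $l_1-l_m\geqslant 2$) admits such a transfer, which strictly decreases $aecc_3$; iterating terminates at the unique sequence with $l_1-l_m\leqslant 1$, namely $ST_{n,m}$. Together with the reduction above, this identifies $ST_{n,m}$ as the unique minimizer in $\mathscr{T}_{n,m}$.

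The main obstacle is the proof of the transfer lemma, because the expression for $ecc_3(v)$ is only piecewise linear and the quantities $M_1^{(i)},M_2^{(i)}$ can themselves move under the transfer. I would split the vertices into those on $B_p$, those on $B_q$, and those on the remaining legs, and track the change of $\sum_v ecc_3(v)$ on each group separately. The delicate cases occur when one of the two transferred legs is among the two longest of the whole star: then decreasing $l_p$ (or increasing $l_q$) alters $M_1^{(i)},M_2^{(i)}$ for vertices on other legs, shifts the breakpoint $t=l_i-M_2^{(i)}$ inside the $\max$, and also changes the center term $l_1+l_2$; verifying that the total change is negative is precisely where the work lies. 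To keep the bookkeeping manageable I would first prove the lemma for a transfer between two legs neither of which is among the two longest, where $M_1^{(i)},M_2^{(i)}$ are unaffected and the estimate reduces to a clean convexity computation, and then handle the boundary configurations involving a longest leg; the full set of cases is what is needed to drive every generalized star to $ST_{n,m}$.
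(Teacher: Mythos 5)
Your overall route is the same as the paper's: Theorem~\ref{thm2.7} reduces the problem to generalized stars, and a single-vertex transfer from the longest leg to the shortest leg then drives any $S(l_1,\dots,l_m)$ with $l_1-l_m\geqslant 2$ toward $ST_{n,m}$. The paper implements exactly this transfer (it sets $T'=T-u_{l_1-1}u_{l_1}+w_{l_m}u_{l_1}$) and verifies $aecc_3(T)\geqslant aecc_3(T')$ by a case analysis on how $l_2$ compares with $l_1$ and $l_m$. Your closed formula $ecc_3(v)=M_1^{(i)}+\max\{t+M_2^{(i)},\,l_i\}$ is correct in both regimes of the $\max$ and would streamline that bookkeeping.

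However, your central lemma is false as stated: the transfer $(l_p,l_q)\mapsto(l_p-1,l_q+1)$ does \emph{not} strictly decrease $aecc_3$ in general, so your concluding claim that $ST_{n,m}$ is the unique minimizer does not follow and is in fact not true. The paper's Case~3 (the configuration $l_2=l_m<l_1$) computes that the transfer leaves $aecc_3$ unchanged; concretely, $aecc_3(S(3,1,1))=aecc_3(S(2,2,1))=27/6$, where $S(2,2,1)=ST_{6,3}$ is the balanced star. This is precisely why Theorem~\ref{thm2.8}, unlike Theorem~\ref{thm2.7}, asserts only that $ST_{n,m}$ \emph{minimizes} rather than that it is the unique minimizer. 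Your argument does go through once you weaken ``strictly decreases'' to ``does not increase'' --- weak monotonicity of the transfer, iterated until $l_1-l_m\leqslant 1$, is enough to conclude that the balanced star attains the minimum --- but the uniqueness claim must be dropped. The case analysis you defer (in particular the boundary configurations where a transferred leg is among the two longest) is genuinely where the content lies, and is what the paper's proof spends its length on; attempting it would have surfaced the equality case above.
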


In the rest of this section we recall some important known results. In Section 2, we introduce one transformation on trees, which makes degree sequence constant but does not decrease the average Steiner 3-eccentricity index. In section 3, we first give the proof of Theorem \ref{thm2.1}. Then we compare two different degree sequences according to their average Steiner 3-eccentricity indices if they satisfy with the majorization partial ordering of degree sequences. Consequently, we determine the sharp upper bounds on the average Steiner 3-eccentricity index of trees with given parameters. In section 4 we give the proofs of Theorem \ref{thm2.7} and Theorem \ref{thm2.8}. Some concluding remarks are given in the last section.

\subsection{\normalsize The preliminaries}\setcounter{equation}{0}
For the rest of our introduction we recall the following important preliminaries. The first one provides a method of calculating the value of $ecc_3(v,T)$ in $T$ when fixing the vertex $v$.

\begin{lem}[\cite{1}]\label{lem2.2}
Let $v$ be a vertex of a tree $T$ and let $S=\{v,x,y\}$ be a $3$-ecc $v$-set, where the $v,x$-path $P$ is a longest path in $T$ starting from $v$. Then $d_{T}(y,p)= ecc_{T}(P)$.
\end{lem}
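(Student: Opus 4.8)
The plan is to reduce the statement to two ingredients: an additive decomposition of the Steiner distance of a triple in a tree, and the defining maximality of a $3$-ecc $v$-set. First I would record the structure of a Steiner tree of three vertices. Since $T$ is a tree, the Steiner tree $T_S$ of $S=\{v,x,y\}$ is unique and equals the union of the three pairwise paths $P_{vx}\cup P_{vy}\cup P_{xy}$, which meet in a single vertex $m$, the median of $v,x,y$; thus $T_S$ consists of the three arms $P_{vm}$, $P_{xm}$, $P_{ym}$ joined at $m$. Because $m$ lies simultaneously on $P_{vx}=P$, on $P_{vy}$ and on $P_{xy}$, it is precisely the vertex of $P$ nearest to $y$, i.e.\ the projection of $y$ onto $P$, so $d_T(y,m)=d_T(y,P)$ where $d_T(y,P)=\min_{p\in V(P)}d_T(y,p)$.

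From this I would extract the key identity
\[
d_T(S)=|E(T_S)|=d_T(v,m)+d_T(m,x)+d_T(y,m)=d_T(v,x)+d_T(y,P),
\]
using $d_T(v,m)+d_T(m,x)=d_T(v,x)$ since $m\in P$. Hence the Steiner distance of $\{v,x,y\}$ splits as the fixed length $d_T(v,x)$ of $P$ plus the distance from the third vertex to $P$.

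Next I would invoke maximality. As $S$ is a $3$-ecc $v$-set, $d_T(S)\ge d_T(\{v,x,y'\})$ for every $y'\in V(T)\setminus\{v,x\}$; applying the identity to both members, with the same path $P=P_{vx}$, gives $d_T(v,x)+d_T(y,P)\ge d_T(v,x)+d_T(y',P)$, so that $d_T(y,P)\ge d_T(y',P)$ for all such $y'$. Therefore $d_T(y,P)=\max_{w\in V(T)}d_T(w,P)=ecc_T(P)$, which is the claim. In this argument the hypothesis that $P$ is a longest path from $v$ serves only to fix which of the two non-$v$ vertices is taken as the endpoint of $P$ (making the decomposition canonical for subsequent use); the equality itself follows from maximality once that labeling is chosen.

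The step I expect to require the most care is the additive decomposition, namely the claim that the unique Steiner tree of $\{v,x,y\}$ contains all of $P_{vx}$ and attaches the branch to $y$ exactly at the projection $m$. I would justify this using two elementary tree facts: any connected subgraph of $T$ containing $v$ and $x$ must contain the unique $v,x$-path $P$, and the minimal connected subgraph that in addition contains $y$ is obtained by appending precisely the $y,m$-path, where $m$ is the point of $P$ nearest $y$. Once this structural fact and the median-equals-projection identity are in place, the extremal step is immediate, so the whole content of the lemma rests on this decomposition.
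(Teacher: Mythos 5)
Your proof is correct, but note that there is nothing in the paper to compare it against: the paper imports Lemma~\ref{lem2.2} from \cite{1} with no proof, so your argument must be judged on its own, and it stands. Both ingredients are sound. In a tree the Steiner tree of $\{v,x,w\}$ is the union of the three pairwise paths, glued at the median $m$, and since every path from $w$ to a vertex of $P=P_{vx}$ first meets $P$ at $m$, the median is the projection of $w$ onto $P$; this gives the decomposition $d_T(\{v,x,w\})=d_T(v,x)+d_T(w,P)$ for every $w\notin\{v,x\}$. Comparing the $3$-ecc $v$-set $S=\{v,x,y\}$ with the competitors $\{v,x,y'\}$, all sharing the same path $P$, then yields $d_T(y,P)\geqslant d_T(y',P)$; together with $d_T(w,P)=0$ for $w\in V(P)$ (which silently covers $w\in\{v,x\}$ in the passage to $\max_{w\in V(T)}$) and the trivial bound $d_T(y,P)\leqslant ecc_T(P)$, the equality follows. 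Your side remark is also accurate: the maximality argument never uses that $P$ is a longest path starting from $v$, so the identity holds for either labeling of the two non-$v$ vertices of any $3$-ecc $v$-set. One caution, though, about what that hypothesis is really doing in this paper: the way Lemma~\ref{lem2.2} is applied afterwards --- computing $ecc_3(v,T)$ as the length of a longest path $P$ from $v$ plus $ecc_T(P)$ --- additionally requires that some $3$-ecc $v$-set contains a vertex farthest from $v$. That existence statement is a separate fact (established in \cite{1}) and is neither claimed by the lemma nor delivered by your argument, so your observation that the hypothesis is ``only a labeling'' is true for the conditional statement itself but should not be read as making the hypothesis dispensable in the paper's subsequent use.
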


Note that $ecc_{T}(P)=\max_{u\in V(T)}\{d_{T}(u,P)\}$. Through lemma \ref{lem2.2}, we realize that the value of $ecc_3(v,T)$ for certain vertex $v$ in tree $T$ is equal to the length of longest path $P$ starting from $v$ plus $ecc_{T}(P)$. Recall the following construction, which comes from \cite{10a}. \vspace{2mm}

\noindent {\bf $\pi$-transformation}: Let $T$ be a tree and let $P = P(u, v, T)$ be a path containing at least one edge, satisfying every internal
vertex of $P$ is of degree $2$ in $T.$ Let $X$ be the maximal subtree containing $u$ in the tree $T- E(P),$ and $Y$ be the maximal
subtree containing $v$ in the graph $T- E(P)$. Assume, without loss of generality, that $\varepsilon_{Y}(v) \geqslant \varepsilon_{X}(u).$ Then
the \textit{$\pi$-transformation} $\pi(T)$ of $T$ is defined as $T' = \pi(T) = T - \{(u, w) : w \in N_X (u)\} + \{(v, w) : w \in N_X(u)\}.$ The inverse transformation is $T = \pi^{-1}(T') = T' - \{(v, w) : w \in N_X(u)\}+  \{(u, w) : w \in N_X (u)\}$. The $\pi$-transformation is depicted in Fig.~2, which is from \cite{10a}.
\begin{figure}[h!]
  \centering
  % Requires \usepackage{graphicx}
  \includegraphics[width=100mm]{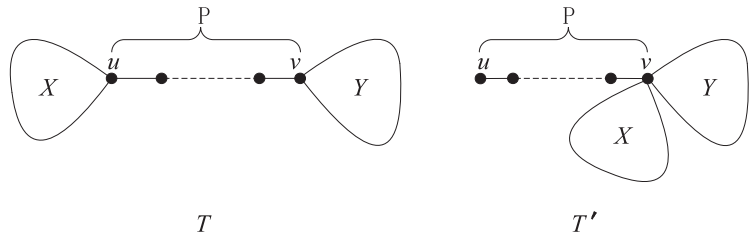}\\
  \caption{$T'=\pi(T)$ and $T=\pi^{-1}(T').$}\label{f2}
\end{figure}
\begin{lem}\label{lem1.5}
Under the notation as above, $aecc_k(T)\geqslant aecc_k(T').$
\end{lem}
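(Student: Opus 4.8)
The plan is to prove the stronger pointwise statement that $ecc_k(w,T)\ge ecc_k(w,T')$ holds for every vertex $w$; since $V(T)=V(T')$ and $|V(T)|=|V(T')|=n$, summing over $w$ and dividing by $n$ then yields $aecc_k(T)\ge aecc_k(T')$. Write the transferred path as $P\colon u=p_0,p_1,\dots,p_\ell=v$ with $\ell\ge 1$, so that $V$ is partitioned into $X$ (the subtree containing $u$), the internal path vertices $p_1,\dots,p_{\ell-1}$, and $Y$ (the subtree containing $v$). The $\pi$-transformation only detaches the branches of $X$ hanging at $u$ and reattaches them at $v$; the internal metric of $X$, the internal metric of $Y$, and the path are left untouched. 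A direct check of how distances change shows that the only pairwise distances that strictly increase under $\pi$ are those between a relocated vertex $a\in X\setminus\{u\}$ and a low-index path vertex $p_i$ (including $u=p_0$): indeed $d_{T'}(a,p_i)=d_X(a,u)+(\ell-i)$ against $d_T(a,p_i)=d_X(a,u)+i$, while distances from $X\setminus\{u\}$ to $Y$ decrease by $\ell$ and all other distances are unchanged.

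Now fix $w$ and let $S'$ be a $k$-ecc $w$-set in $T'$. For any $k$-set the Steiner distance in a tree splits along $P$ as (depth spanned inside $X$) $+$ (depth spanned inside $Y$) $+$ (number of path edges used), and I would record this decomposition for $d_{T'}(S')$ and for $d_T(S)$ side by side, noting that in $T$ the $X$-part meets the path at $u=p_0$ whereas in $T'$ it meets it at $v=p_\ell$. The first main step is to dispose of the ``easy'' regimes by the choice $S=S'$: if $S'$ avoids the relocated branch $X\setminus\{u\}$ then $T$ and $T'$ agree on the subtree $\{u\}\cup P\cup Y$ that contains $S'$, so $d_T(S')=d_{T'}(S')$; and if $S'$ meets both the relocated branch and $Y$, then in $T$ the spanning tree already uses all $\ell$ path edges, so its path contribution dominates that in $T'$ and $d_T(S')\ge d_{T'}(S')$. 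In every such case $ecc_k(w,T)\ge d_T(S')\ge d_{T'}(S')=ecc_k(w,T')$.

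The one genuinely problematic regime is when $S'$ meets the relocated branch and some low-index path vertex but misses $Y$; writing $\alpha$ for the depth spanned inside $X$ and $p_j$ for the lowest-indexed path vertex of $S'$, we have $d_{T'}(S')=\alpha+(\ell-j)$, exploiting the long detour $p_j\to v\to(\text{branch})$ that $\pi$ created, so $S=S'$ can lose. The remedy is a reflection: replace the low path vertices of $S'$ by a deepest vertex $y^\ast$ of $Y$, chosen with $d_Y(v,y^\ast)=\varepsilon_Y(v)$. In $T$ the resulting set spans from $u$ (depth $\alpha$ into $X$) straight across the whole path (length $\ell$) and down into $Y$ (depth $\varepsilon_Y(v)$), giving Steiner distance at least $\alpha+\ell+\varepsilon_Y(v)\ge\alpha+(\ell-j)=d_{T'}(S')$. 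It is precisely the hypothesis $\varepsilon_Y(v)\ge\varepsilon_X(u)$ that guarantees the reflected branch $Y$ is at least as deep as what we gave up, so the swap never loses length; the elementary case $k=2$, where $\varepsilon_T(w)\ge\varepsilon_{T'}(w)$ for all $w$, is exactly this reflection in miniature and can serve as the base sanity check.

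The step I expect to be the main obstacle is the bookkeeping that keeps $w\in S$ and $|S|=k$ fixed while performing the reflection. One must show that a droppable path vertex different from $w$, or (when no such vertex exists, e.g.\ when $w$ is the unique chosen path vertex) a droppable relocated vertex different from $w$, together with an addable deep vertex $y^\ast\notin S'$, can always be found; this forces a short case analysis organized by the location of $w$ (in $X\setminus\{u\}$, on the path, equal to $u$, or in $Y$) and by which of the three parts of $S'$ are nonempty. Verifying in each branch of this analysis that the reflected set still has Steiner distance at least $d_{T'}(S')$---using only $\varepsilon_Y(v)\ge\varepsilon_X(u)$, the fact that dropping one relocated vertex lowers the $X$-depth by at most $\varepsilon_X(u)$, and $\ell\ge 1$---is routine but is where the real work lies.
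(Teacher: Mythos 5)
Your proposal is correct, but note that the paper does not actually prove this lemma: it is stated without proof and imported from reference \cite{10a}, and the only thing the paper later uses about its proof is exactly the pointwise inequality $ecc_k(w,T)\geqslant ecc_k(w,T')$ that you establish. Your argument --- splitting on whether the optimal set $S'$ in $T'$ meets $X\setminus\{u\}$, the path, and $Y$, keeping $S=S'$ in the easy regimes, and in the problematic regime reflecting a droppable vertex to a deepest vertex $y^\ast$ of $Y$ while using $\varepsilon_Y(v)\geqslant\varepsilon_X(u)$ to bound the loss of at most $\varepsilon_X(u)$ in $X$-depth --- checks out in every subcase, so this is essentially the same approach as the cited proof and is sound.
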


Clearly, the following result is a direct consequence of Lemma \ref{lem1.5}.
\begin{cor}\label{thm6.1}
Under the notation as above, $aecc_3(T)\geqslant aecc_3(T').$
\end{cor}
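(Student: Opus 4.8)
The plan is immediate: the Corollary is precisely the specialization of Lemma~\ref{lem1.5} to $k=3$. Since Lemma~\ref{lem1.5} asserts $aecc_k(T)\geqslant aecc_k(T')$ for the general Steiner $k$-eccentricity under the $\pi$-transformation, one need only substitute $k=3$ to obtain $aecc_3(T)\geqslant aecc_3(T')$. No further argument is required, and this is exactly the route I would take.

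It is nonetheless worth recording where the substance actually lies, namely in Lemma~\ref{lem1.5} itself. The $\pi$-transformation relocates the subtree $X$, originally attached at $u$, so that it hangs from the opposite endpoint $v$ of the degree-$2$ path $P$; because the standing assumption is $\varepsilon_Y(v)\geqslant \varepsilon_X(u)$, this moves the shorter bush onto the taller side and thereby concentrates the branches of $T$. To prove the underlying lemma one would compare $ecc_k(w,T)$ with $ecc_k(w,T')$ vertex by vertex, grouping the vertices $w$ according to whether they lie in $X$, in $Y$, on $P$, or at $u$. For the case $k=3$ relevant here, Lemma~\ref{lem2.2} lets one write each such eccentricity as the length of a longest path $P'$ starting at $w$ plus $ecc_T(P')$, which reduces the comparison to a finite case analysis.

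The main (and only) obstacle therefore belongs to Lemma~\ref{lem1.5} and not to the Corollary: the transformation merges $X$ and $Y$ at a common vertex, shortening the cross-distances between the two branches while lengthening only the single pendent tail formed by $u$ together with $P$, so that no vertex's Steiner $k$-eccentricity can increase. Granting the lemma, the Corollary follows with no computation whatsoever.
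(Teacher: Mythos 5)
Your proposal is correct and matches the paper exactly: the paper likewise derives the corollary as the immediate $k=3$ specialization of Lemma~\ref{lem1.5}, with no further argument. The additional remarks you make about how Lemma~\ref{lem1.5} itself would be proved are not needed for the corollary and do not change the assessment.
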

%\section{\normalsize Transformations on trees }\setcounter{equation}{0}
\section{\normalsize A novel transformation on trees}\setcounter{equation}{0}\setcounter{case}{0}
In this section, we introduce a novel transformation on trees. Then one may describe the monotonicity of the average Steiner 3-eccentricity index with respect to the novel transformation on a tree. \vspace{2mm}

\noindent {\bf $\sigma$-transformation}: Let $T$ be a tree with degree sequence $\pi$ and let $P = v_0v_1\ldots v_k\ldots v_d$ be a diametric path of $T$. Assume that $v_ky$ is not a pendant edge of $T$. Let $X$ be the maximal subtree containing $y$ in the graph $T- v_ky$, then $\varepsilon_{X}(y)\geqslant1$ for $\vert V(X)\vert\geqslant 2$. Assume, without loss of generality, that $k \leqslant \lfloor \frac{d}{2} \rfloor.$ Then
the \textit{$\sigma$-transformation} $\sigma(T)$ of $T$ is defined as $T' = \sigma(T) = T - \{(y, w) : w \in N_X (y)\} + \{(v_d, w) : w \in N_X(y)\}.$ The $\sigma$-transformation is depicted in Fig.~3.
\begin{figure}[h!]
\centering
\psfrag{1}{$T$}
\psfrag{2}{$T'$}
\psfrag{3}{$v_0$}
\psfrag{4}{$v_1$}
\psfrag{5}{$v_k$}
\psfrag{6}{$y$}
\psfrag{9}{$v_{d-1}$}
\psfrag{A}{$v_d$}
\psfrag{e}{$X$}
\psfrag{k}{$Y$}
\includegraphics[width=150mm]{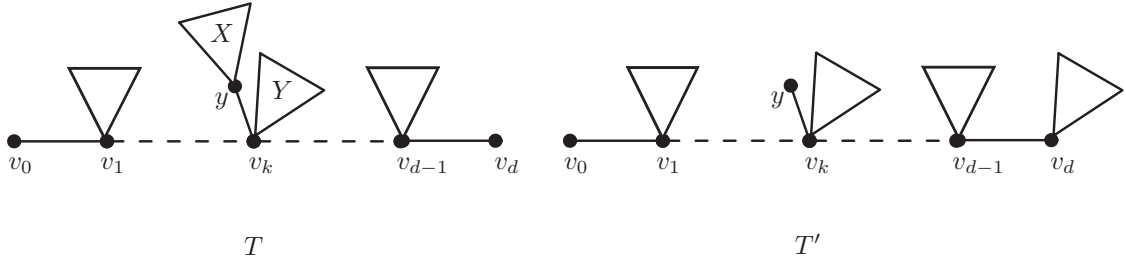} \\
    \caption{$\sigma$-transformation: $T'=\sigma(T)$.}
    \label{fig5}
\end{figure}

\begin{thm}\label{lem3.1}
Under the notation in the $\sigma$-transformation above, one has $aecc_3(T')>aecc_3(T)$.
\end{thm}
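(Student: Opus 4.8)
My plan is to reduce everything to Lemma~\ref{lem2.2}, which expresses, for each vertex $v$, $ecc_3(v,T)=\varepsilon_T(v)+ecc_T(P_v)$ — the length of a longest path $P_v$ issuing from $v$ plus the off-path spread $ecc_T(P_v)=\max_u d_T(u,P_v)$. Since $\sigma$ leaves the number of vertices unchanged, it suffices to prove $\sum_v ecc_3(v,T')>\sum_v ecc_3(v,T)$. I would first record the structural facts that drive the comparison: because $P=v_0\cdots v_d$ is diametric, $v_0$ and $v_d$ are leaves; writing $r:=\varepsilon_X(y)\ge 1$, the path running from $v_d$ through $v_k$ and $y$ into $X$ has length $(d-k)+1+r$ and cannot exceed $d$, forcing $r+1\le k$; and the standing hypothesis $k\le\lfloor d/2\rfloor$ yields $d-k\ge k\ge r+1$. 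I would then check that in $T'$ the relocated subtree $X-y$ hangs at the endpoint $v_d$, so that $v_0\cdots v_d$ prolonged $r$ steps into $X-y$ has length $d+r$ and $diam(T')=d+r>d=diam(T)$.

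Next I would localize exactly what $\sigma$ moves. The only pairwise distances that change are those with one endpoint in the relocated set $X':=X-y$ and the other in $V(T)\setminus V(X)$: for such a $w\in X'$, putting $\alpha(w):=d_X(w,y)$, one has $d_T(w,u)=\alpha(w)+1+d_T(v_k,u)$ but $d_{T'}(w,u)=\alpha(w)+d_T(v_d,u)$, so the signed shift equals $d_T(v_d,u)-d_T(v_k,u)-1$, which is positive toward the $v_0$-end (it equals $d-k-1\ge r$ when $u=v_0$) and negative toward $v_d$. Feeding this identity into Lemma~\ref{lem2.2}, I would compute the eccentricity part of each $ecc_3$: both $v_k$ and the now-pendent vertex $y$ gain exactly $r$ in $\varepsilon$, and with their off-path term unaffected (it stays pinned at the $v_0$-side depth $k$, which dominates $1+r$), these two vertices contribute a robust gain of $r$ each; the relocated vertices themselves gain $k-1$ in $\varepsilon$, although their off-path term may fall. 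I would organize the remaining bookkeeping by grouping the vertices as (i) the relocated set $X'$, (ii) the vertex $y$, and (iii) the spine-and-remainder split at $v_k$, obtaining $\Delta(v):=ecc_3(v,T')-ecc_3(v,T)$ in each group.

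The decisive step is the sign of $\sum_v\Delta(v)$, and this is where I expect the real difficulty to lie, because the change is not monotone vertex by vertex: the vertices lying toward $v_0$ together with $v_k$ and $y$ gain, since their longest path is re-routed through $v_d$ into the lengthened end, whereas vertices near $v_d$ can lose — not in $\varepsilon$ but in the off-path term $ecc_T(P_v)$, as the subtree that formerly protruded perpendicularly at $v_k$ becomes collinear once moved to $v_d$ and so ceases to contribute width. The genuinely delicate point will be to bound these off-path spreads uniformly, since $ecc_T(P_v)$ may be supported on branches of $T$ other than $X$, and then to prove that the losses concentrated near $v_d$ are dominated by the gains; this is precisely the role of $k\le\lfloor d/2\rfloor$, which guarantees that $X$ is relocated to the endpoint farther from $v_k$ and so maximizes the available gain. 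Summing the grouped $\Delta(v)$ — anchored by the certain gains $\Delta(v_k)=\Delta(y)=r$ and reinforced along the $v_0$-side, against the bounded losses near $v_d$ — I would conclude $\sum_v\Delta(v)>0$, hence $aecc_3(T')>aecc_3(T)$.
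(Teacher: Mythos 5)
Your framework is the same as the paper's: apply Lemma~\ref{lem2.2} vertex by vertex, group the vertices by their location relative to $P$, $X$ and $Y$, and anchor the comparison on the guaranteed gains at $v_k$ and $y$. Your preparatory facts are correct and useful ($\varepsilon_X(y)+1\leqslant k\leqslant d-k$ since $P$ is diametric; the longest path in $T'$ has length $d+\varepsilon_X(y)$; $\Delta(v_k)=\Delta(y)=\varepsilon_X(y)$). But the proof stops exactly where the theorem lives. You write that the ``genuinely delicate point will be to bound these off-path spreads uniformly'' and to ``prove that the losses concentrated near $v_d$ are dominated by the gains,'' and then assert $\sum_v\Delta(v)>0$ without doing either. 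That is the whole content of the statement. What has to be established --- and what the paper's Cases 1--7 carry out --- is the pointwise inequality $\Delta(v)\geqslant 0$ for every $v\neq v_d$ together with $\Delta(v_d)\geqslant -1$; combined with $\Delta(v_k),\Delta(y)\geqslant\varepsilon_X(y)\geqslant 1$ this gives $\sum_v\Delta(v)\geqslant 2\varepsilon_X(y)-1>0$. Your proposal never proves these inequalities, and for the vertices $v_j$ with $k<j<d$ (and the subtrees hanging off them) the verification genuinely requires the subcase analysis on whether $j\geqslant\lfloor(d+\varepsilon_X(y))/2\rfloor$ and on whether $\varepsilon_X(y)+1$ attains the off-path maximum; it is not a routine consequence of the identities you listed.

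Two further points. First, your worry about ``losses concentrated near $v_d$'' overstates the difficulty in a way that, once resolved, actually simplifies the bookkeeping: for $v_j$ with $k<j\leqslant d-1$ the term $\varepsilon_X(y)+1$ that $X$ contributed to the off-path maximum at $v_k$ is replaced in $T'$ by a contribution of $X'$ at $v_d$ of size at least $\varepsilon_X(y)+1$ (either as off-path depth $d-j+\varepsilon_X(y)$ or as a prolongation of the longest path itself), so $\Delta(v_j)\geqslant 0$ there; only the single vertex $v_d$ can lose, and only by $1$, namely when $\varepsilon_X(y)+1$ was the strict maximizer of its off-path term. Second, a small inaccuracy in your localization of the change: the pairs $\{w,y\}$ with $w\in X\setminus\{y\}$ also change distance under $\sigma$ (from $d_X(w,y)$ to $d_X(w,y)+d-k+1$), contrary to your claim that only pairs with one end in $X'$ and the other outside $V(X)$ are affected; this does not derail the argument, but it must be accounted for when you compute $\varepsilon_{T'}(y)$ and the eccentricities of vertices of $X'$.
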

\begin{proof}
According to the $\sigma$-transformation, we know that $V(T)=V(T'),\, d_{T'}(y)=d_T(v_d)=1,\, d_{T'}(v_d)=d_T(y)$ and $d_T(u)=d_{T'}(u)$ for all $u\in V(T)\setminus \{y, v_d\}$. Hence, $T'=\sigma(T)$ and $T$ have the same degree sequence.

For $0\leqslant i\leqslant d,$ let $T_{v_i}$ (resp. $T_{v_i}'$) be the maximal subtree containing $v_i$ in $T-E(P)$ (resp. $T'-E(P)$). Then $T_{v_i}\cong T_{v_i}'$ for $i\in \{0,\dots,k-1, k+1,\ldots,d-1\}$. Put $Y:= T\left[ V\left(T_{v_k}\right)\backslash V(X)\right]$.

Firstly we show that $ecc_{3}(v,T')-ecc_{3}(v,T)\geqslant -1$ when $v=v_d$. For convenience, let $A_0=\{1,\dots,d-1\}$. Consider that $v_0$,$v_d$-path is the longest path starting from $v_d$ in $T$ and $T'$. Then by Lemma \ref{lem2.2}, we find that
\begin{align}\label{eq002}
ecc_{3}(v_d,T') &= d_{T}(v_0,v_d)+\max_{\substack{t\in A_0\setminus \{k\}}} \{\varepsilon_{T_{v_k}}(v_k),\varepsilon_{Y}(v_k),1,\varepsilon_{X}(y)\}\notag\\
&=d+\max_{\substack{t\in A_0\setminus \{k\}}} \{\varepsilon_{T_{v_k}}(v_k),\varepsilon_{Y}(v_k),1,\varepsilon_{X}(y)\}.
\end{align}
If $\varepsilon_{X}(y)+1> \max_{\substack{t\in A_0\setminus \{k\}}} \{\varepsilon_{T_{v_t}}(v_t),\varepsilon_{Y}(v_k)\}$, by Lemma \ref{lem2.2}, we find that $ecc_{3}(v_d,T)=d_{T}(v_0,v_d)+\varepsilon_{X}(y)+1=d+\varepsilon_{X}(y)+1$. From \eqref{eq002}, we obtain $ecc_{3}(v_d,T')=d+\varepsilon_{X}(y),$ which implies that $ecc_{3}(v_d,T')-ecc_{3}(v_d,T)=-1$.
If $\varepsilon_{X}(y)+1\leqslant \max_{\substack{t\in A_0\setminus \{k\}}}\{\varepsilon_{T_{v_t}}(v_t),\varepsilon_{Y}(v_k)\}$, by Lemma \ref{lem2.2}, we get that $ecc_{3}(v_d,T) =d_{T}(v_0,v_d) +\max_{t\in A_0\setminus \{k\}}\{\varepsilon_{T_{v_k}}(v_k),\varepsilon_{Y}(v_k)\} = d+\max_{t\in A_0\setminus \{k\}}\{\varepsilon_{T_{v_k}}(v_k),\varepsilon_{Y}(v_k)\}.$ Together with \eqref{eq002}, we obtain that
$$ecc_{3}(v_d,T')-ecc_{3}(v_d,T)\geqslant \max_{\substack{t\in A_0\setminus \{k\}}} \{\varepsilon_{T_{v_k}}(v_k),\varepsilon_{Y}(v_k),1 \}-\max_{t\in A_0\setminus \{k\}}\{\varepsilon_{T_{v_k}}(v_k),\varepsilon_{Y}(v_k)\}\geqslant 0.$$
We may also obtain $ecc_{3}(v_d,T')-ecc_{3}(v_d,T)\geqslant -1.$

Next we show that $ecc_{3}(v,T')-ecc_{3}(v,T)\geqslant 0$ for all $v\in V(T)\backslash\{v_d\}$ according to the following several possible cases.

{\bf Case 1.} $v = v_{j}$ for $j=0,\dots,k-1.$\ In this case, let $A_1=\{j+1,\dots,d-1\}.$ Since $j<k \leqslant \lfloor \frac{d}{2} \rfloor$, we realise that $v_j$,$v_d$-path is the longest path starting from $v_j$ in $T$. And the length of the longest path starting from $v_j$ in $T'$ is $d_{T}(v_j,v_d)+\varepsilon_{X}(y)$.

We first consider $j=d_{T}(v_0,v_j)\geqslant \max\{\varepsilon_{T_{v_t}}(v_t): t\in A_1\}$. Note that $\max\{\varepsilon_{T_{v_t}}(v_t): t\in A_1\}\geqslant \max\{\varepsilon_{T'_{v_t}}(v_t): t\in A_1\}$. Hence, by Lemma \ref{lem2.2}, we obtain that $ecc_{3}(v_{j},T)=d_{T}(v_j,v_d)+d_{T}(v_0,v_j)=d$ and $ecc_{3}(v_{j},T')=d_{T}(v_j,v_d)+\varepsilon_{X}(y)+d_{T}(v_0,v_j)=d+\varepsilon_{X}(y)$. Therefore, $ecc_{3}(v_j,T)< ecc_{3}(v_j,T')$ holds in this subcase.

Now we consider $j=d_{T}(v_0,v_j)< \max\{\varepsilon_{T_{v_t}}(v_t): t\in A_1\}$. By Lemma \ref{lem2.2}, we get $ecc_{3}(v_{j},T')=d_{T}(v_j,v_d)+\varepsilon_X(y)+d_{T}(v_0,v_j)$ if $d_{T}(v_0,v_j)\geqslant \max\{\varepsilon_{T'_{v_t}}(v_t): t\in A_1\}$, and $ecc_{3}(v_{j},T')= d_{T}(v_j,v_d)+\varepsilon_X(y)+\max_{ t\in A_1\setminus \{k\}}$ $\{\varepsilon_{T'_{v_t}}(v_t), \varepsilon_Y(v_k),1\}$ otherwise. So, we could summarize as
\begin{align}\label{eq:2.01}
\begin{split}
ecc_{3}(v_{j},T')&=\left\{
                       \begin{array}{ll}
                         d+\varepsilon_X(y), & \hbox{if $j \geqslant \max\{\varepsilon_{T'_{v_t}}(v_t): t\in A_1\};$} \\
                         (d-j)+\varepsilon_X(y)+\max_{ t\in A_1\setminus \{k\}}\{\varepsilon_{T'_{v_t}}(v_t),\varepsilon_Y(v_k),1\}, & \hbox{otherwise.}
                       \end{array}
                     \right.
\end{split}
\end{align}
If $\varepsilon_{X}(y)+1> \max_{\substack{t\in A_1\setminus \{k\}}} \{\varepsilon_{T_{v_t}}(v_t),\varepsilon_{Y}(v_k)\}$, by Lemma \ref{lem2.2},
$ecc_{3}(v_j,T)=d_{T}(v_j,v_d)+\varepsilon_X(y)+1=(d-j)+\varepsilon_X(y)+1$. Together with \eqref{eq:2.01}, we obtain
$$
ecc_{3}(v_j,T')-ecc_{3}(v_j,T)\geqslant\max_{\substack{t\in A_1\setminus \{k\}}} \{ \varepsilon_{T'_{v_t}}(v_t), \varepsilon_Y(v_k), 1\}-1
\geqslant 0.
$$
If $\varepsilon_{X}(y)+1\leqslant \max_{\substack{t\in A_1\setminus \{k\}}} \{\varepsilon_{T_{v_t}}(v_t),\varepsilon_{Y}(v_k)\}$, then
$ecc_{3}(v_{j},T)=d_{T}(v_j,v_d)+\max_{t\in A_1\setminus \{k\}}\{\varepsilon_{T_{v_t}}(v_t),\varepsilon_{Y}(v_k)\}=(d-j)+\max_{t\in A_1\setminus \{k\}}\{\varepsilon_{T_{v_t}}(v_t),\varepsilon_{Y}(v_k)\}.$
Together with \eqref{eq:2.01}, one has
$$ecc_{3}(v_j,T')-ecc_{3}(v_j,T)\geqslant\varepsilon_X(y)+\max_{ t\in A_1\setminus \{k\}}\{\varepsilon_{T'_{v_t}}(v_t),\varepsilon_Y(v_k),1\}-\max_{t\in A_1\setminus \{k\}}\{\varepsilon_{T_{v_t}}(v_t),\varepsilon_{Y}(v_k)\}> 0.$$
Thus, our result holds in this case.

{\bf Case 2.}\ $v\in V(T_{v_j})$ with $j\in \{0,\dots,k-1\}.$\ \ In this case, one has
$$
ecc_{3}(v,T)=d_T(v,v_j)+ecc_{3}(v_j,T),\ \ \ \ \ \ ecc_{3}(v,T')=d_T(v,v_j)+ecc_{3}(v_j,T').
$$
This gives
$$ecc_{3}(v,T')-ecc_{3}(v,T)=ecc_{3}(v_j,T')-ecc_{3}(v_j,T).$$
In view of Case 1, we obtain that $ecc_{3}(v,T')-ecc_{3}(v,T)\geqslant 0$ holds in this case.

{\bf Case 3.}\  $v=v_k.$\ \  In this case, let $A_3=\{k+1,\dots,d-1\}$. Since $k \leqslant \lfloor \frac{d}{2} \rfloor$, we realise that $v_k$,$v_d$-path is the longest path starting from $v_k$ in $T$. And the length of the longest path starting from $v_k$ in $T'$ is $d_{T}(v_k,v_d)+\varepsilon_{X}(y)$.

We first consider $k=d_{T}(v_0,v_k)\geqslant \max\{\varepsilon_{T_{v_t}}(v_t): t\in A_3\}$. Note that $\max\{\varepsilon_{T_{v_t}}(v_t): t\in A_3\}\geqslant \max\{\varepsilon_{T'_{v_t}}(v_t): t\in A_3\}$. Hence, by Lemma \ref{lem2.2}, we obtain that $ecc_{3}(v_{k},T)=d_{T}(v_k,v_d)+d_{T}(v_0,v_k)=d$ and $ecc_{3}(v_{k},T')=d_{T}(v_k,v_d)+\varepsilon_{X}(y)+d_{T}(v_0,v_k)=d+\varepsilon_{X}(y)$. Therefore, $ecc_{3}(v_k,T)< ecc_{3}(v_k,T')$ holds for this subcase.

Now we consider $k=d_{T}(v_0,v_k)< \max\{\varepsilon_{T_{v_t}}(v_t): t\in A_3\}$. According to Lemma \ref{lem2.2}, by similar calculation we get that
\begin{align*}
\begin{split}
ecc_{3}(v_k,T)&=(d-k)+\max\{\varepsilon_{T_{v_t}}(v_t): t\in A_3\}.
\end{split}\\
\begin{split}
ecc_{3}(v_k,T')&=\left\{
                    \begin{array}{ll}
                        d+\varepsilon_{X}(y), & \hbox{if $k \geqslant \max\{\varepsilon_{T'_{v_t}}(v_t): t\in A_3\};$} \\
                        (d-k)+\varepsilon_{X}(y)+\max\{\varepsilon_{T'_{v_t}}(v_t): t\in A_3\}, &\hbox{otherwise.}
                    \end{array}
                    \right.
\end{split}
\end{align*}
This gives $ecc_{3}(v_k,T')-ecc_{3}(v_k,T)\geqslant\varepsilon_{X}(y)> 0$. Therefore, our result holds in this case.

{\bf Case 4.}\   $v\in \{y\}\cup (V(Y)\backslash\{v_k\}).$ In this case, we may get
$$
ecc_{3}(v,T)=d_T(v,v_k)+ecc_{3}(v_k,T),\ \ \ \ \ \ ecc_{3}(v,T')=d_T(v,v_k)+ecc_{3}(v_k,T').
$$
This gives $$ecc_{3}(v,T')-ecc_{3}(v,T)=ecc_{3}(v_k,T')-ecc_{3}(v_k,T).$$
In view of Case 3, we may obtain $ecc_{3}(v,T')-ecc_{3}(v,T)>0$ consequently, as desired.

{\bf Case 5.}\    $v\in V(X)\backslash\{y\}.$ In this case, let $A_5=\{k+1,\dots,d-1\}$ and $B_5=\{1,\dots,d-1\}$. Consider that $v$,$v_0$-path is the longest path starting from $v$ in $T'$. Then by Lemma \ref{lem2.2}, we get
\begin{align}\label{eq003}
 ecc_{3}(v,T')&=d_{T'}(v,v_0)+\max_{\substack{t\in B_5\setminus \{k\}}} \{\varepsilon_{T_{v_t}}(v_t), \varepsilon_{Y}(v_k),1\}\notag\\
 &=d+d_T(v,y)+\max_{\substack{t\in B_5\setminus \{k\}}} \{\varepsilon_{T_{v_t}}(v_t), \varepsilon_{Y}(v_k),1\}
\end{align}
Since $k \leqslant \lfloor \frac{d}{2} \rfloor$, we realise that $v$,$v_d$-path is the longest path starting from $v$ in $T$. If $k=d_{T}(v_0,v_k)\geqslant \max\{\varepsilon_{T_{v_t}}(v_t): t\in A_5\}$, by Lemma \ref{lem2.2}, $ecc_{3}(v,T)=d_{T}(v,v_d)+d_{T}(v_0,v_k)=d+d_T(v,y)+1$. Together with \eqref{eq003}, we obtain that
$$
 ecc_{3}(v,T')-ecc_{3}(v,T)=\max_{\substack{t\in B_5\setminus \{k\}}} \{\varepsilon_{T_{v_t}}(v_t), \epsilon_{Y}(v_k),1\}-1\geqslant 0.
$$
If $k=d_{T}(v_0,v_k)< \max\{\varepsilon_{T_{v_t}}(v_t): t\in A_5\}$, by Lemma \ref{lem2.2}, $ecc_{3}(v,T)=d_{T}(v,v_d)+\max\{\varepsilon_{T_{v_t}}(v_t): t\in A_5\}=(d-k)+d_T(v,y)+1+\max\{\varepsilon_{T_{v_t}}(v_t): t\in A_5\}$. Note that $A_5\subseteq B_5\setminus \{k\}$, then $\max_{\substack{t\in B_5\setminus \{k\}}} \{\varepsilon_{T_{v_t}}(v_t), \epsilon_{Y}(v_k),1\}\geqslant \max\{\varepsilon_{T_{v_t}}(v_t): t\in A_5\}$. Hence, together with \eqref{eq003},
$$
 ecc_{3}(v,T')-ecc_{3}(v,T)=k+\max_{\substack{t\in B_5\setminus \{k\}}} \{\varepsilon_{T_{v_t}}(v_t), \epsilon_{Y}(v_k),1\}-\max\{\varepsilon_{T_{v_t}}(v_t): t\in A_5\}-1\geqslant 0.
$$
Therefore, our result holds in this case.

{\bf Case 6.}\   $v = v_j$ for $j=k+1,\dots,d-1.$\ \ In this case, let $A_6=\{1,\dots,j-1\}$ and $B_6=\{j+1,\dots,d-1\}$.

We first consider $j<\lfloor \frac{d}{2} \rfloor$. Then, we realise that $v_j$,$v_d$-path is the longest path starting from $v_j$ in $T$. Hence, we obtain by Lemma \ref{lem2.2} that
\begin{align}
\label{eq:1}
ecc_{3}(v_{j},T)&=d_{T}(v_j,v_d)+\max_{t\in B_6}\{\varepsilon_{T_{v_t}}(v_t),d_{T}(v_0,v_j)\}\notag\\
&=(d-j)+\max_{t\in B_6}\{\varepsilon_{T_{v_t}}(v_t), j \}
\end{align}
Meanwhile, note that $\lfloor \frac{d}{2} \rfloor < \lfloor \frac{d+\varepsilon_{X}(y)}{2} \rfloor$. We say the length of the longest path starting from $v_j$ in $T'$ is $d_{T}(v_j,v_d)+\varepsilon_{X}(y)$. By Lemma \ref{lem2.2},
\begin{align}
\label{eq:2}
ecc_{3}(v_{j},T')&=d_{T}(v_j,v_d)+\varepsilon_{X}(y)+\max_{t\in B_6}\{\varepsilon_{T_{v_t}}(v_t), d_{T}(v_0,v_j)\}\notag\\
&=(d-j)+\varepsilon_{X}(y)+\max_{t\in B_6}\{\varepsilon_{T_{v_t}}(v_t), j\}
\end{align}
Combine \eqref{eq:1} with \eqref{eq:2}, it gives $ecc_{3}(v_j,T')-ecc_{3}(v_j,T)=\varepsilon_{X}(y)>0$.

Now we consider $j\geqslant \lfloor \frac{d}{2} \rfloor$. Since the length of the longest path starting from $v_j$ in $T'$ is $d_{T}(v_0,v_j)$ if $j\geqslant \lfloor \frac{d+\varepsilon_{X}(y)}{2} \rfloor$ and $d_{T}(v_j,v_d)+\varepsilon_{X}(y)$ otherwise. By Lemma \ref{lem2.2}, we could obtain the following equation through similar calculation above,
\begin{align}\label{eq:2.02}
\begin{split}
ecc_{3}(v_j,T')&=\left\{
                    \begin{array}{ll}
                    j+\max_{\substack{t\in A_6\setminus \{k\}}} \{ \varepsilon_{T_{v_t}}(v_t),\varepsilon_{Y}(v_k),1,(d-j)+\varepsilon_{X}(y)\}, &\hbox{if $j\geqslant \lfloor \frac{d+\varepsilon_{X}(y)}{2} \rfloor;$}\\ (d-j)+\varepsilon_{X}(y)+\max_{t\in B_6}\{\varepsilon_{T_{v_t}}(v_t),j\}, &\hbox{otherwise}.
                    \end{array}
                    \right.
\end{split}
\end{align}
Note that $j\geqslant \lfloor \frac{d}{2} \rfloor$, then $v_0$,$v_j$-path is the longest path starting from $v_j$ in $T$. If $\varepsilon_{X}(y)+1> \max_{\substack{t\in A_6\setminus \{k\}}}$ $\{\varepsilon_{T_{v_t}}(v_t),\varepsilon_{Y}(v_k)\}$, then by Lemma \ref{lem2.2}, $ecc_{3}(v_j,T)=d_{T}(v_0,v_j)+\varepsilon_{X}(y)+1=j+\varepsilon_{X}(y)+1$. Together with \eqref{eq:2.02}, since $j\leqslant d-1$, one has
$$ecc_{3}(v_j,T')-ecc_{3}(v_j,T)\geqslant d+\varepsilon_{X}(y)-(j+\varepsilon_{X}(y)+1) \geqslant 0.$$
If $\varepsilon_{X}(y)+1\leqslant \max_{\substack{t\in A_6\setminus \{k\}}} \{\varepsilon_{T_{v_t}}(v_t),\varepsilon_{Y}(v_k)\}$, then
$ecc_{3}(v_j,T)=d_{T}(v_0,v_j)+\max_{t\in A_6\setminus \{k\}}\{\varepsilon_{T_{v_t}}(v_t),d_{T}(v_j,v_d)\}
=j+\max_{t\in A_6\setminus \{k\}}\{\varepsilon_{T_{v_t}}(v_t),d-j\}$.
Together with \eqref{eq:2.02}, we could also verify that $ecc_{3}(v_j,T')-ecc_{3}(v_j,T)\geqslant 0 $.
In fact, if $ecc_{3}(v_j,T)=d$, then $$ecc_{3}(v_j,T')-ecc_{3}(v_j,T)\geqslant d+\varepsilon_{X}(y)-d>0;$$ if $ecc_{3}(v_j,T)=j+\max\{\varepsilon_{T_{v_t}}(v_t): t\in A_6\setminus \{k\}\}$ and $j\geqslant \lfloor \frac{d+\varepsilon_{X}(y)}{2} \rfloor$, then
$$ecc_{3}(v_j,T')-ecc_{3}(v_j,T)\geqslant \max_{\substack{t\in A_6\setminus \{k\}}} \{ \varepsilon_{T_{v_t}}(v_t),\varepsilon_{T'_y}(v_k),1\}-\max\{\varepsilon_{T_{v_t}}(v_t): t\in A_6\setminus \{k\}\}\geqslant 0;$$
if $ecc_{3}(v_j,T)=j+\max\{\varepsilon_{T_{v_t}}(v_t): t\in A_6\setminus \{k\}\}$ and $j< \lfloor \frac{d+\varepsilon_{X}(y)}{2} \rfloor$, since $\max\{\varepsilon_{T_{v_t}}(v_t): t\in A_6\setminus \{k\}\}\leqslant t\leqslant j< \lfloor \frac{d+\varepsilon_{X}(y)}{2} \rfloor< d-j+\varepsilon_{X}(y)$, then
$$ecc_{3}(v_j,T')-ecc_{3}(v_j,T)\geqslant d+\varepsilon_{X}(y)-(j+\max\{\varepsilon_{T_{v_t}}(v_t): t\in A_6\setminus \{k\}\})>0.$$
Above all, this gives $ecc_{3}(v_j,T')-ecc_{3}(v_j,T)\geqslant 0 $. Thus our result holds in this case.

{\bf Case 7.}\ $v\in V(T_{v_j})$ with $j=k+1,\dots,d-1.$ \ In this case, it is easy to see that
$$
ecc_{3}(v,T)=d_T(v,v_j)+ecc_{3}(v_j,T),\ \ \ \ \ \ ecc_{3}(v,T')=d_T(v,v_j)+ecc_{3}(v_j,T').
$$
This gives us
$$ecc_{3}(v,T')-ecc_{3}(v,T)=ecc_{3}(v_j,T')-ecc_{3}(v_j,T).$$
In view of Case 6, we obtain that $ecc_{3}(v,T')-ecc_{3}(v,T)\geqslant 0,$ as desired.

All the situations have been discussed. We realise that the strict inequality $ecc_{3}(v,T')-ecc_{3}(v,T)\geqslant\varepsilon_{X}(y)> 0$ holds for at least two vertices $v_k$ and $y$ stated in Case 3 and 4. And $ecc_{3}(v,T')-ecc_{3}(v,T)\geqslant 0$ holds in other cases. By the definition of average Steiner 3-eccentricity, it thus follows that
\begin{align*}
aecc_3(T')-aecc_3(T)&=\sum_{v\in V(T)}\left(ecc_{3}(v,T')-ecc_{3}(v,T)\right)\\
&\geqslant \sum_{v\in \{v_k,v_d,y\}}\left(ecc_{3}(v,T')-ecc_{3}(v,T)\right)\\
&\geqslant 2\varepsilon_{X}(y)-1\\
&>0
\end{align*}
which makes Theorem \ref{lem3.1} hold.
\end{proof}

\section{\normalsize Proofs of Theorems \ref{thm2.1} and its direct consequences}\setcounter{equation}{0}
\begin{proof}[\bf Proof of Theorem \ref{thm2.1}]
Let $T_0$ be an arbitrary tree of order $n$ with given degree sequence $\pi=(d_1,\dots,d_n)$. We consider the following cases:\setcounter{case}{0}
\begin{case}\label{case0001}
$T_0$ is a caterpillar.
\end{case}
If $d_1=2$, then we have $T_0\cong P_n$ and $aecc_3(T_0)=n-1$. Otherwise, $d_1\geq3$, then $T_0$ could be shown as Fig~\ref{fig4}.
\begin{figure}
\centering
\psfrag{1}{$v_1$}
\psfrag{2}{$v_2$}
\psfrag{3}{$v_{k-1}$}
\psfrag{4}{$v_{k}$}
\psfrag{5}{$\dots$}
\psfrag{6}{$y_1$}
\psfrag{7}{$y_2-1$}
\psfrag{8}{$y_{k-1}-1$}
\psfrag{9}{$y_k$}
\includegraphics[width=80mm]{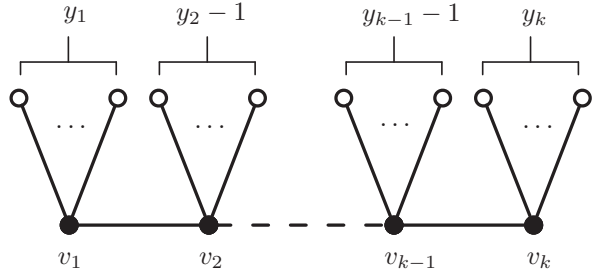} \\
\caption{Caterpillar $T_0$ with given degree sequence $(d_{v_1},\dots,d_{v_k},d_{v_{k+1}},\dots,d_{v_n})$.}
\label{fig4}
\end{figure}
Let $d(v_i)=y_i+1\geqslant 2 $ for $i=1,\dots,k$ and $d(v_{k+1})=\ldots =d(v_n)=1$, then all the vertices in spine have same values of $ecc_{3}(v,T_0)=k+1 $ and all the leaves have same values of $ecc_{3}(v,T_0)=k+2$.
\begin{align*}
aecc_3(T_0)=\frac{(k+2)(n-k)+(k+1)k}{n}=\frac{(n-1)k+2n}{n}
\end{align*}
the equality holds in (\ref{eq000}).
\begin{case} $T_0$ is not a caterpillar.\end{case}
In this case, we can repeatedly apply $\sigma$-transformation to $T_0$ until no further $\sigma$-transformation is possible. Without loss of generality, we assume that the process ends in $t>1$ steps and denote $T_t$ as the final tree. Now we claim that we must necessarily arrive at a caterpillar $T_t$. Suppose on the contrary that $T_t$ is not a caterpillar. In this case, there is at least one internal vertex which is adjacent to certain vertex of the longest path $P_1$ in $T_t$. Let $u$ be such internal vertex and $v$ be one of leaves of $P_1$ which is farther from $u$. Now one can do a $\sigma$-transformation to obtain a new tree defined as $T_{t+1}=T_t-\{ux:x\in N_{T_t}(u)\backslash V(P_1)\}+\{vx:x\in N_{T_t}(u)\backslash V(P_1)\}$. This contradicts the fact that $T_t$ is achieved in the last step. Hence, in the last step we must arrive at a caterpillar. From Theorem \ref{lem3.1} and Case \ref{case0001} above, we have
\begin{align*}
\begin{split}
aecc_3(T_0)<aecc_3(T_t)&= \left\{
                       \begin{array}{ll}
                         \frac{nk+2n-k}{n}, & \hbox{if $d_1\geq3;$} \\
                         n-1, & \hbox{if $d_1 = 2$.}
                       \end{array}
                     \right.
\end{split}
\end{align*}
Thus the inequality in (\ref{eq000}) is strict.

As a matter of fact, caterpillars with given degree sequence $\pi$ has the maximal average Steiner 3-eccentricity which implies that $\mathcal{C}_{n,\pi}^*$ is the set of extremal trees.
\end{proof}
From the proof of Theorem \ref{thm2.1}, we also realise that the value of average Steiner 3-eccentricity of a caterpillar of order $n$ is only relevant to the length of its spine.

Before we derive the maximum average Steiner 3-eccentricity in some classes of trees, we recall the notion of majorization\cite{6}.
Let $x=(x_1,\dots,x_n)$ and $y=(y_1,\dots,y_n)$ be two non-increasing and non-negative sequences. We say that $x$ \textit{majorizes} $y$, written as $x\succeq y$, if, for $k = 1,\dots,n-1,$
\begin{align*}
\textit{$\sum_{i=1}^{k}x_i \geqslant \sum_{i=1}^{k}y_i$ \ \ and \ \ $\sum_{i=1}^{n}x_i = \sum_{i=1}^{n}y_i.$}
\end{align*}
We will apply this concept to degree sequence on trees.

Now we give a kind of partition of $\mathbb{T}_n$ defined as followed. Let $\mathbb{T}_n=\{T: \hbox{ $T$ is a tree of order $n$} \}$ and $\Pi=\{\pi: \hbox{$\pi$ is degree sequence of any tree in $\mathbb{T}_n$}\}$. Note that
\begin{enumerate}[(i)]
\item $\mathbb{T}_n = \bigcup_{\pi \in \Pi}\mathcal{T}_{n,\pi};$
\item $\mathcal{T}_{n,\pi_1}\cap \mathcal{T}_{n,\pi_2} = \emptyset$ for any two degree sequences $\pi_1,\pi_2 \in \Pi$.
\end{enumerate}
Then, we say that $\{\mathcal{T}_{n,\pi}:\pi \in \Pi\}$ is a partition of $\mathbb{T}_n$.

For convenience, we denotes $T_\pi^*$ as any caterpillar of $\mathcal{T}_{n,\pi}$.
\begin{thm}\label{thm4.1}
For two degree sequences $\pi_1$ and $\pi_2$ in $\Pi$, where both the maximal degrees of those are not less than 3. If $\pi_1\succeq \pi_2$ ,then $aecc_{3}(T_{\pi_1}^*)\leqslant aecc_{3}(T_{\pi_2}^*)$.
\end{thm}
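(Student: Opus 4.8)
The plan is to collapse the comparison of two caterpillars into the comparison of a single integer parameter and then to show that the majorization order controls that parameter monotonically. The starting point is the closed form already extracted in the proof of Theorem~\ref{thm2.1}: for a caterpillar whose degree sequence $\pi$ has maximal entry at least $3$, one has
$$aecc_3(T_\pi^*)=\frac{nk+2n-k}{n}=\frac{(n-1)k+2n}{n},$$
where $k=k(\pi)$ is the number of entries of $\pi$ that are at least $2$, that is, the number of internal (spine) vertices. This value is well defined on $\mathcal{T}_{n,\pi}$ (it depends only on the spine length, as noted after Theorem~\ref{thm2.1}), and it is a strictly increasing linear function of $k$ since its slope $(n-1)/n$ is positive. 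Consequently the whole theorem reduces to the purely combinatorial claim that $\pi_1\succeq\pi_2$ forces $k(\pi_1)\leqslant k(\pi_2)$; a smaller $k$ then yields a smaller $aecc_3$.

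The heart of the argument is therefore the claim: if $\pi_1=(d_1^{(1)},\dots,d_n^{(1)})$ and $\pi_2=(d_1^{(2)},\dots,d_n^{(2)})$ are degree sequences of $n$-vertex trees with $\pi_1\succeq\pi_2$, then $k_1:=k(\pi_1)\leqslant k_2:=k(\pi_2)$. Both sequences are non-increasing, have $n$ entries, and share the total $\sum_i d_i^{(1)}=\sum_i d_i^{(2)}=2(n-1)$; each has exactly $n-k_j$ trailing entries equal to $1$, so the sum of the entries of $\pi_j$ that are at least $2$ equals $n+k_j-2$. I would argue by contradiction, assuming $k_1>k_2$ (the case $k_1=1$ is trivial, since every tree on $n>2$ vertices has $k\geqslant 1$). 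I evaluate the majorization inequality at the index $k_1-1$. On the left, the first $k_1-1$ entries are all at least $2$ and sum to $(n+k_1-2)-d_{k_1}^{(1)}\leqslant n+k_1-4$, because $d_{k_1}^{(1)}\geqslant 2$. On the right, since $k_2<k_1$ the entries $d_{k_2+1}^{(2)},\dots,d_{k_1-1}^{(2)}$ all equal $1$, so $\sum_{i=1}^{k_1-1}d_i^{(2)}=(n+k_2-2)+(k_1-1-k_2)=n+k_1-3$. Hence the required inequality $\sum_{i=1}^{k_1-1}d_i^{(1)}\geqslant\sum_{i=1}^{k_1-1}d_i^{(2)}$ would give $n+k_1-4\geqslant n+k_1-3$, a contradiction. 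Therefore $k_1\leqslant k_2$.

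Combining the two ingredients finishes the proof: both $\pi_1$ and $\pi_2$ have maximal degree at least $3$, so the displayed formula applies to each, and $k_1\leqslant k_2$ together with the positive slope $(n-1)/n$ yields $aecc_3(T_{\pi_1}^*)\leqslant aecc_3(T_{\pi_2}^*)$.

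The only genuinely delicate point is the combinatorial claim, and its subtlety lies in choosing the correct index at which to probe majorization. Testing the partial sums at index $k_1$ produces mere equality (both sides equal $n+k_1-2$) and no information, so one must step back to index $k_1-1$, where the extra unit of degree concentrated at position $k_1$ in $\pi_1$ is exactly what collides with the majorization inequality. Everything else is bookkeeping: the closed form for $aecc_3$ of a caterpillar is supplied by Theorem~\ref{thm2.1}, and its monotonicity in $k$ is immediate.
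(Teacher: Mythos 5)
Your proof is correct and follows essentially the same route as the paper: both reduce the statement to the claim that $\pi_1\succeq\pi_2$ forces the number of internal-degree entries to satisfy $k_1\leqslant k_2$, and both establish that claim by contradiction by testing the majorization inequality at the index $k_1-1$, where the bound $d_{k_1}^{(1)}\geqslant 2$ produces the off-by-one clash. The only difference is cosmetic bookkeeping (you compute via the identity that the entries $\geqslant 2$ sum to $n+k_j-2$, while the paper manipulates the partial sums directly).
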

\begin{proof}
Let $\pi_1=(d_1,\dots,d_n)$ and $\pi_2=(d_1',\dots,d_n')$ be two degree sequences in $\Pi$ with $d_1\geqslant3$, $d_1'\geqslant3$.
Assume that $d_s\geqslant2$, $d_{s+1}=\dots=d_n=1$ and $d_t'\geqslant2$, $d_{t+1}'=\dots=d_n'=1$ where $1<s,t<n$.

We claim that if $\pi_1\succeq \pi_2$, then $s\leqslant t$. In fact, suppose that $s>t$, then $d_i=d_i'=1$ for $i=s+1,\dots,n$. Considering that $\sum_{i=1}^{n}d_i=\sum_{i=1}^{n}d_i'$=2(n-1), we get $\sum_{i=1}^{s}d_i=\sum_{i=1}^{s}d_i'$. Since $\sum_{i=1}^{s}d_i'=(s-t)+\sum_{i=1}^{t}d_i'=1+\sum_{i=1}^{s-1}d_i' $, we get
$$
\sum_{i=1}^{s-1}d_i =\sum_{i=1}^{s}d_i - d_{s}=\sum_{i=1}^{s}d_i' - d_{s}
=\left((s-t)+\sum_{i=1}^{t}d_i' \right)- d_{s}
<(s-t)+\sum_{i=1}^{t}d_i' - 1
=\sum_{i=1}^{s-1}d_i'
$$
which contradicts $\pi_1\succeq \pi_2$.

From Theorem \ref{thm2.1}, since $d_1\geqslant3$ and $d_1'\geqslant3$, we obtain that
$$aecc_{3}(T_{\pi_1}^*) = \frac{(n-1)s+2n}{n}\leqslant \frac{(n-1)t+2n}{n}=aecc_{3}(T_{\pi_2}^*),$$
as desired.
\end{proof}

Using Theorems \ref{thm2.1} and \ref{thm4.1}, we may deduce extremal graphs with the maximum average Steiner 3-eccentricity in some class of graphs. The next corollary establishes the sharp upper bound on the largest average Steiner 3-eccentricity of trees among $\mathbb{T}_n$.
\begin{cor}\label{thm2.2}
Let $T$ be an $n$-vertex tree. Then $aecc_{3}(T)\leqslant n-1$ with equality if and only if $T\cong P_n$.
\end{cor}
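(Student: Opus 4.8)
The plan is to exploit the partition $\{\mathcal{T}_{n,\pi}:\pi\in\Pi\}$ of $\mathbb{T}_n$ together with Theorem~\ref{thm2.1}. Since every $n$-vertex tree lies in exactly one class $\mathcal{T}_{n,\pi}$, and Theorem~\ref{thm2.1} shows that within each class the average Steiner 3-eccentricity is maximized precisely by the caterpillars $T_\pi^*$, it suffices to maximize $aecc_3(T_\pi^*)$ over all admissible degree sequences $\pi$. First I would separate the two regimes of Theorem~\ref{thm2.1} according to whether $d_1=2$ or $d_1\geqslant 3$, and show that the maximum over all $\pi$ is attained only in the former regime.

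In the regime $d_1=2$ the degree sequence is forced to be $(2,\dots,2,1,1)$, so the corresponding class consists of the single tree $P_n$, and $aecc_3(P_n)=n-1$. For the regime $d_1\geqslant 3$, Theorem~\ref{thm2.1} gives the bound $\frac{(n-1)k+2n}{n}$, where $k$ is the number of vertices of degree at least $2$. The key observation is that a tree having a branch vertex must have at least three leaves: from $\sum_{v}(d(v)-2)=-2$ one obtains that the number of leaves equals $2+\sum_{d(v)\geqslant 3}(d(v)-2)\geqslant 2+(d_1-2)\geqslant 3$. Hence $k\leqslant n-3$. Since $\frac{(n-1)k+2n}{n}$ is increasing in $k$, substituting $k=n-3$ yields
$$aecc_3(T_\pi^*)\leqslant \frac{(n-1)(n-3)+2n}{n}=n-2+\frac{3}{n}<n-1$$
for all $n\geqslant 4$ (the regime $d_1\geqslant 3$ being vacuous for $n\leqslant 3$).

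Combining the two regimes, the overall maximum over $\mathbb{T}_n$ is $n-1$, and it is attained only in the class with $d_1=2$, that is, only by $P_n$; every degree sequence with $d_1\geqslant 3$ gives a strictly smaller value, and the path class has $P_n$ as its unique member. This delivers both the inequality $aecc_3(T)\leqslant n-1$ and the equality characterization. I do not expect any genuine obstacle: the substantive work is front-loaded into Theorem~\ref{thm2.1}, and the only point requiring care is the leaf-count inequality $k\leqslant n-3$, which is exactly what forces the $d_1\geqslant 3$ bound strictly below $n-1$. It is worth noting that Theorem~\ref{thm4.1}, being restricted to sequences of maximum degree at least $3$, cannot by itself settle the comparison against the path, which is precisely why the direct numerical estimate above is the more convenient route.
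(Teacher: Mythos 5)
Your proposal is correct and ends at the same numerical comparison as the paper, namely $\frac{(n-1)(n-3)+2n}{n}=n-2+\frac{3}{n}<n-1$ for $n>3$, but it gets there by a different route. The paper also splits off the path class $\hat\pi=(2,\dots,2,1,1)$ exactly as you do, but for the regime $d_1\geqslant 3$ it introduces the reference sequence $\bar\pi=(3,2,\dots,2,1,1,1)$, verifies by an explicit partial-sum computation that $\bar\pi\preceq\pi$ for every admissible $\pi$ with $d_1\geqslant 3$, and then invokes the majorization comparison of Theorem~\ref{thm4.1} to conclude $aecc_3(T_\pi^*)\leqslant aecc_3(T_{\bar\pi}^*)$. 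You bypass the majorization layer entirely: the identity $\sum_v(d(v)-2)=-2$ forces at least $2+(d_1-2)\geqslant 3$ leaves once $d_1\geqslant 3$, hence $k\leqslant n-3$, and the bound $\frac{(n-1)k+2n}{n}$ from Theorem~\ref{thm2.1} is increasing in $k$. Your version is more elementary and self-contained for this corollary; what the paper's detour buys is that the majorization framework, and the observation that the caterpillar value depends only on the number of internal vertices, are set up once and then reused verbatim in Corollaries~\ref{thm2.3}--\ref{thm2.5}. Your closing remark is also accurate: Theorem~\ref{thm4.1} is stated only for sequences with maximum degree at least $3$, so it cannot by itself settle the comparison against $P_n$, which is why the paper, like you, must finish with the direct numerical estimate.
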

\begin{proof}
Clearly, $T\cong P_3$ for $n=3$. By a direct calculation, we get $aecc_{3}(P_3)=2$. The equality holds obviously.

Now we assume $n>3$. Let $\hat{\pi}=(\underbrace{2,\dots,2}_{n-2},1,1)$ and $\bar{\pi}=(3,\underbrace{2,\dots,2}_{n-4},1,1,1)$.
Let $T$ be any $n$-vertex tree and $\pi$ be the degree sequence of $T$. If $\pi=\hat{\pi}$, then $T\cong P_n$ and $aecc_3(T)=n-1$ which makes the equality hold. Otherwise, $\pi=(d_1,\dots,d_n) \in \Pi\backslash\{\hat{\pi}\}$ and $d_1\geqslant 3$. It implies $T\in\mathbb{T}_n\backslash P_n$. We claim that $d_{n-2}=d_{n-1}=d_n= 1$. In fact, if not, then $d_{n-2}>1$ which results $\pi=\hat{\pi}$. It thus makes a contradictory.

We will verify that $\bar{\pi}\preceq \pi$. Since $d_1\geqslant 3$ and $d_{n-2}=d_{n-1}=d_n= 1$, then $\sum_{i=1}^{n-3}d_i=2(n-1)-3=3+2(n-4)$. There must exist $t\in\{2,\dots n-3\}$ so that $d_t\geqslant 2$ and $d_{t+1}=\dots=d_{n}=1$. If $t=n-3$, then $\pi=\bar{\pi}$ which makes $\bar{\pi}\preceq \pi$ hold. If $t\leqslant n-4$, we have
\begin{align*}
d_1&\geqslant 3;\\
\sum_{i=1}^{m}d_i&=d_1+\sum_{i=2}^{m}d_i\geqslant 3+2(m-1),\ \hbox{for $m=2,\dots,t$;}\\
\sum_{i=1}^{m}d_i&=\sum_{i=1}^{n-3}d_i-\sum_{i=m+1}^{n-3}d_i=3+2(n-4)-(n-3-m)>3+2(m-1) ,\ \hbox{for $m=t+1,\dots,n-4$;}\\
\sum_{i=1}^{m}d_i&=3+2(n-4)+(m-n+3) ,\ \hbox{for $m=n-3,\dots,n$.}
\end{align*}
Hence, $\bar{\pi}\preceq \pi$.
Then by Theorem \ref{thm2.1} and \ref{thm4.1}, we obtain
$$aecc_{3}(T)\leqslant aecc_{3}(T_\pi^*)\leqslant aecc_{3}(T_{\bar{\pi}}^*)=\frac{(n-1)(n-3)+2n}{n}$$
Since $aecc_{3}(P_n)-aecc_{3}(T_{\bar{\pi}}^*)=n-1-\frac{(n-1)(n-3)+2n}{n}=\frac{n-3}{n}>0$ for $n>3$. It shows that $aecc_{3}(T)< aecc_{3}(P_n)$ for all $T\in \mathbb{T}_n\backslash P_n$. So the equality holds if and only if $T\cong P_n$.

Above all, we complete the proof.
\end{proof}

The next four results characterize extremal trees having the maximum average Steiner 3-eccentricity with some given parameters.

Let $\mathbb{T}_{n,\Delta}$ be the set of trees of order $n>2$ with maximum degree $\Delta\geq3$ and $\mathcal{B}_{n,\Delta}$ be the set of trees, each of which is obtained from $P_{n-\Delta+2}$ by attaching $\Delta-2$ pendent vertices to certain internal vertex of $P_{n-\Delta+2}$.
\begin{cor}\label{thm2.3}
Let $T$ be in $\mathbb{T}_{n,\Delta}$. Then $aecc_{3}(T)\leqslant \frac{(n-1)(n-\Delta)+2n}{n}$
with equality if and only if $T\in \mathcal{B}_{n,\Delta}$.
\end{cor}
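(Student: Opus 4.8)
The plan is to keep the maximum degree fixed at $\Delta$ and run the same two-step scheme as in Corollary~\ref{thm2.2}: first bound $aecc_3$ by a quantity depending only on the number of nonleaf vertices, then maximize that quantity over the admissible degree sequences. First I would fix an arbitrary $T\in\mathbb{T}_{n,\Delta}$ with degree sequence $\pi=(d_1,\dots,d_n)$, so $d_1=\Delta\geqslant 3$, and let $k$ be the number of vertices of degree at least $2$. Since $d_1\geqslant 3$, Theorem~\ref{thm2.1} yields $aecc_3(T)\leqslant \frac{(n-1)k+2n}{n}$, with equality precisely when $T$ is a caterpillar, and this bound is strictly increasing in $k$. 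Thus it suffices to maximize $k$ over trees with $d_1=\Delta$.

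Next I would bound $k$. The $n-k$ leaves contribute $n-k$ to the degree sum $\sum_i d_i=2(n-1)$, so the $k$ nonleaf vertices have degrees summing to $n-2+k$; as one of them equals $\Delta$ and the remaining $k-1$ are each at least $2$, this forces $n-2+k\geqslant \Delta+2(k-1)$, i.e. $k\leqslant n-\Delta$. Substituting gives $aecc_3(T)\leqslant \frac{(n-1)(n-\Delta)+2n}{n}$, the desired bound. Equivalently, writing $\pi^*=(\Delta,\underbrace{2,\dots,2}_{n-\Delta-1},\underbrace{1,\dots,1}_{\Delta})$, the same leaf-counting shows $\pi\succeq\pi^*$, after which Theorem~\ref{thm4.1} delivers $aecc_3(T_\pi^*)\leqslant aecc_3(T_{\pi^*}^*)=\frac{(n-1)(n-\Delta)+2n}{n}$, exactly mirroring the proof of Corollary~\ref{thm2.2}. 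Moreover, $k=n-\Delta$ holds if and only if every nonleaf vertex other than the one of degree $\Delta$ has degree exactly $2$, that is, if and only if $\pi=\pi^*$.

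For the equality case, the bound is attained iff both estimates are tight: $T$ is a caterpillar and $\pi=\pi^*$, i.e. $T\in\mathcal{C}_{n,\pi^*}^*$. It then remains to verify the set identity $\mathcal{C}_{n,\pi^*}^*=\mathcal{B}_{n,\Delta}$. A caterpillar realizing $\pi^*$ has its $n-\Delta$ nonleaf vertices forming the spine (a path), exactly one of them of degree $\Delta$ and the rest of degree $2$; peeling off the $\Delta-2$ surplus pendent leaves at the branch vertex exposes an underlying path on $n-\Delta+2$ vertices to whose internal vertex those leaves are attached, so $T\in\mathcal{B}_{n,\Delta}$, and the reverse inclusion is immediate. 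I expect this identification to be the main obstacle, since one must handle with care the position of the branch vertex along the spine — notably the case where it is a spine endpoint, which still corresponds to attaching the pendent leaves at an internal vertex of the base path $P_{n-\Delta+2}$ — in order to confirm that $\mathcal{B}_{n,\Delta}$ is exactly the family of caterpillars with degree sequence $\pi^*$ and nothing more.
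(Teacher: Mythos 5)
Your proposal is correct and follows essentially the same route as the paper: invoke Theorem~\ref{thm2.1}, show the degree sequence of any $T\in\mathbb{T}_{n,\Delta}$ majorizes $\pi_{\Delta}=(\Delta,2,\dots,2,1,\dots,1)$ (your direct count $k\leqslant n-\Delta$ of internal vertices is exactly what drives Theorem~\ref{thm4.1}), and then settle equality by identifying the caterpillars with degree sequence $\pi_{\Delta}$ with $\mathcal{B}_{n,\Delta}$. Your explicit check of the set identity $\mathcal{C}_{n,\pi_{\Delta}}^*=\mathcal{B}_{n,\Delta}$, including the spine-endpoint position of the branch vertex, is a point the paper leaves implicit, and your handling of it is right.
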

\begin{proof}
Let $\Pi_{\Delta}=\{\pi \in \Pi:d_1=\Delta\}$ and $\pi_{\Delta}=(\Delta,\underbrace{2,\dots,2}_{n-\Delta-1},\underbrace{1,\dots,1}_{\Delta})$. Clearly, $\pi_{\Delta}\in \Pi_{\Delta}$ and $\{\mathcal{T}_{n,\pi}:\pi \in \Pi_{\Delta}\}$ is a partition of $\mathbb{T}_{n,\Delta}$.

If $T\in \mathcal{B}_{n,\Delta}$, then $T$ is a caterpillar with degree sequence $\pi_{\Delta}$. According to Theorem \ref{thm2.1}, $aecc_{3}(T)=\frac{(n-1)(n-\Delta)+2n}{n}$
which makes the equality hold.

Otherwise, let $T$ be any tree in $\mathbb{T}_{n,\Delta}\backslash\mathcal{B}_{n,\Delta}$ and $\pi$ be the degree sequence of $T$.
Using similar proof above, we could obtain that $\pi_{\Delta}\preceq \pi$.
By Theorem \ref{thm4.1} and \ref{thm2.1}, we get
$$aecc_{3}(T)\leqslant aecc_{3}(T_\pi^*)\leqslant aecc_{3}(T_{\pi_{\Delta}}^*)=\frac{(n-1)(n-\Delta)+2n}{n}.$$
If $\pi\neq \pi_{\Delta}$, then $T\in\mathbb{T}_{n,\Delta}\backslash\mathcal{T}_{n,\pi_{\Delta}}$. Comparing two degree sequences, we find that the number of degrees greater than $1$ in $\pi_{\Delta}$ is strictly larger than that in $\pi$. From the proof of Theorem \ref{thm4.1}, the inequality $aecc_{3}(T_\pi^*) < aecc_{3}(T_{\pi_{\Delta}}^*)$ holds. If $\pi= \pi_{\Delta}$, then $T\in \mathcal{T}_{n,\pi_{\Delta}}\backslash\mathcal{B}_{n,\Delta}$. By Theorem \ref{thm2.1}, the inequality $aecc_{3}(T)<aecc_{3}(T_{\pi}^*)$ holds.

We complete our proof.
\end{proof}

\begin{figure}
\centering
\psfrag{1}{$\lceil\frac{k}{2}\rceil$}
\psfrag{2}{$n-2k-2$}
\psfrag{3}{$\lfloor\frac{k}{2}\rfloor$}
\includegraphics[width=100mm]{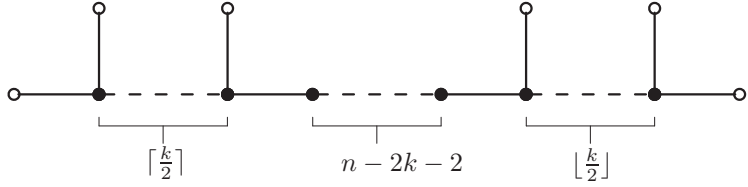} \\
\caption{A caterpillar in $\mathcal{C}_n^{k}$}
\label{fig2}
\end{figure}
Given two integers $k$ and $n$ satisfying $n>2$ and $1\leqslant k \leqslant n-3$, let $\mathbb{T}_n^k$ be the set of $n$-vertex trees, each of which contains exactly $k$ vertices of maximum degree. Let $\mathcal{C}_n^{k}$ be the set of $n$-vertex caterpillars with degree sequence $\pi_k=(\underbrace{3,\dots,3}_k,\underbrace{2,\dots,2}_{n-2k-2},1,\dots,1)$. The graph depicted in Fig.~\ref{fig2} is one of caterpillars in $\mathcal{C}_n^{k}$.
\begin{cor}\label{thm2.4}
Let $T$ be in $\mathbb{T}_n^k$. Then
$aecc_{3}(T)\leqslant \frac{(n-1)(n-k-2)+2n}{n}$ with equality if and only if $T$ is in $\mathcal{C}_n^{k}$.
\end{cor}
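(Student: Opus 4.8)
The plan is to reproduce, for the parameter ``number of vertices of maximum degree,'' the scheme already used for Corollaries~\ref{thm2.2} and~\ref{thm2.3}: partition $\mathbb{T}_n^k$ by degree sequence, reduce each class to its caterpillars through Theorem~\ref{thm2.1}, and single out the extremal degree sequence $\pi_k$ by majorization together with Theorem~\ref{thm4.1}. First I would set $\Pi^k=\{\pi\in\Pi:\pi \text{ has exactly } k \text{ entries equal to its maximum}\}$, so that $\{\mathcal{T}_{n,\pi}:\pi\in\Pi^k\}$ is a partition of $\mathbb{T}_n^k$ and $\pi_k\in\Pi^k$. A preliminary observation is that every $\pi\in\Pi^k$ has maximum degree $\Delta\geqslant 3$: if $\Delta=2$ the tree is $P_n$, whose number of degree-$2$ vertices is $n-2$, contradicting $k\leqslant n-3$. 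Hence the $d_1\geqslant 3$ branch of Theorem~\ref{thm2.1} applies to every class, giving for $T\in\mathcal{T}_{n,\pi}$ the reduction $aecc_3(T)\leqslant aecc_3(T_\pi^*)=\frac{(n-1)\,s(\pi)+2n}{n}$, where $s(\pi)$ is the number of entries of $\pi$ that are at least $2$.

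The combinatorial core is to control $s(\pi)$. Writing $\Delta=d_1=\cdots=d_k$ and splitting the degree sum into maximum-degree vertices, the remaining internal vertices, and the leaves, one gets $2n-2\geqslant k\Delta+2\bigl(s(\pi)-k\bigr)+\bigl(n-s(\pi)\bigr)$, whence $s(\pi)\leqslant n-2-k(\Delta-2)\leqslant n-k-2$. Equality forces $\Delta=3$ and every other internal vertex to have degree exactly $2$, which pins the degree sequence down to $\pi_k$; consequently $\mathcal{C}_n^k$ attains the claimed value $\frac{(n-1)(n-k-2)+2n}{n}$ by Theorem~\ref{thm2.1}, while any $\pi\neq\pi_k$ in $\Pi^k$ has $s(\pi)\leqslant n-k-3$.

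To pass through the majorization machinery of Theorem~\ref{thm4.1}, as in the earlier corollaries, the main step is to verify $\pi_k\preceq\pi$ for every $\pi\in\Pi^k$. Writing $\Sigma_m(\sigma)=\sum_{i=1}^{m}\sigma_i$ for the head sums, I would check $\Sigma_m(\pi_k)\leqslant\Sigma_m(\pi)$ in ranges of $m$. For $m\leqslant k$ both heads consist of maximum-degree entries and $\Sigma_m(\pi)=m\Delta\geqslant 3m=\Sigma_m(\pi_k)$; for $k<m\leqslant s(\pi)$ one uses $d_i\geqslant 2$ with $\Delta\geqslant 3$ to obtain $\Sigma_m(\pi)\geqslant 2m+k=\Sigma_m(\pi_k)$; and once $m\geqslant s(\pi)$ the tail of $\pi$ is all $1$'s, so $\Sigma_m(\pi)=n+m-2$, which exceeds $\Sigma_m(\pi_k)=2m+k$ precisely for $m\leqslant n-k-2$ and matches it for larger $m$. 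The delicate range is the crossover $s(\pi)<m\leqslant n-k-2$ arising when $\Delta>3$: there $\pi$ already carries $1$'s in positions where $\pi_k$ still carries $2$'s, so the inequality is \emph{not} valid term by term and must be recovered from the accumulated surplus encoded in the identity $\Sigma_{s(\pi)}(\pi)=n+s(\pi)-2$. This is the step I expect to demand the most care, exactly as the analogous range did in the proof of Corollary~\ref{thm2.2}.

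Finally I would assemble the equality case. Theorems~\ref{thm2.1} and~\ref{thm4.1} give $aecc_3(T)\leqslant aecc_3(T_\pi^*)\leqslant aecc_3(T_{\pi_k}^*)=\frac{(n-1)(n-k-2)+2n}{n}$. The second inequality is strict whenever $\pi\neq\pi_k$, because then $s(\pi)\leqslant n-k-3$ makes $aecc_3(T_\pi^*)$ strictly smaller; and when $\pi=\pi_k$ the first inequality is strict unless $T$ is a caterpillar, by the equality clause of Theorem~\ref{thm2.1}. Thus the maximum is attained exactly by the caterpillars with degree sequence $\pi_k$, i.e.\ by the members of $\mathcal{C}_n^k$, which is the asserted characterisation.
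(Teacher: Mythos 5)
Your proposal is correct and follows essentially the same route as the paper's own proof: reduce each degree-sequence class to its caterpillars via Theorem~\ref{thm2.1}, establish $\pi_k\preceq\pi$, apply Theorem~\ref{thm4.1}, and obtain strictness from the drop in the number of internal vertices when $\pi\neq\pi_k$ (and from the equality clause of Theorem~\ref{thm2.1} when $\pi=\pi_k$). The only difference is that you spell out the degree-sum bound $s(\pi)\leqslant n-2-k(\Delta-2)\leqslant n-k-2$ and the head-sum verification of the majorization explicitly, where the paper merely asserts these follow ``by similar proof.''
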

\begin{proof}
If $T \in \mathcal{C}_n^{k}$, then $T$ is a caterpillar whose degree sequence is $\pi_k$. From Theorem \ref{thm2.1}, $aecc_3(T)=\frac{(n-1)(n-k-2)+2n}{n}$. The equality holds in this case.

Otherwise, let $T$ be any tree in $\mathbb{T}_n^k\backslash\mathcal{C}_n^{k}$ and $\pi=(d_1,\dots,d_n)$ be the degree sequence of $T$. Without loss of generality, we assume $\Delta$ as the maximum degree of $T$. Then $\Delta\geqslant 3$ for $k \leqslant n-3$. Since there are exactly $k$ vertices of maximum degree, we get that $d_1=\ldots =d_k=\Delta$. We could obtain by similar proof that $\pi_k\preceq \pi$. By Theorem \ref{thm2.1} and \ref{thm4.1},
$$aecc_{3}(T)\leqslant aecc_{3}(T_\pi^*)\leqslant aecc_{3}(T_{\pi_k}^*)=\frac{(n-1)(n-k-2)+2n}{n} .$$
If $\pi\neq \pi_k$, then $T\in\mathbb{T}_n^k\backslash\mathcal{T}_{n,\pi_k}$. We find that the number of degrees greater than $1$ in $\pi_k$ is strictly larger than that in $\pi$. From the proof of Theorem \ref{thm4.1}, the strict inequality $aecc_{3}(T_\pi^*) < aecc_{3}(T_{\pi_k}^*)$ holds. If $\pi =\pi_k$, then $T\in\mathcal{T}_{n,\pi_k}\backslash\mathcal{C}_n^{k}$. By Theorem \ref{thm2.1}, $aecc_{3}(T)<aecc_{3}(T_{\pi}^*)$. So the inequality holds in this case.

As a whole, we complete the proof.
\end{proof}

\begin{figure}
\centering
\psfrag{1}{$\lceil\frac{k}{2}\rceil$}
\psfrag{2}{$n-(\Delta-1)k-2$}
\psfrag{3}{$\lfloor\frac{k}{2}\rfloor$}
\psfrag{4}{$\dots$}
\psfrag{5}{$\Delta-2$}
\includegraphics[width=100mm]{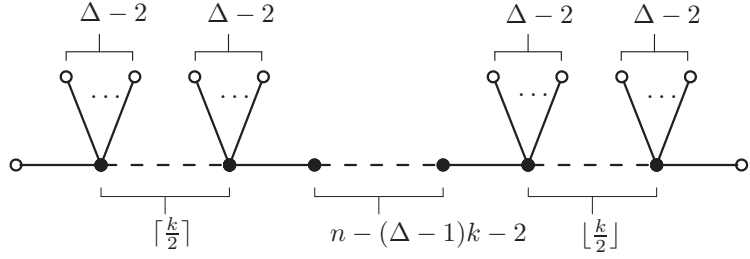} \\
\caption{A caterpillar in $\mathcal{C}_{n,\Delta}^{k}$}
\label{fig3}
\end{figure}
Given three integers $k, n$ and $\Delta$ satisfying $n>3, k\geq1, \Delta>2$ and $n\geqslant 2+k(\Delta-1)$, let $\mathbb{T}_{n,\Delta}^k$ be the set of trees of order $n$ with exactly $k$ vertices of maximum degree equal to $\Delta$, and let $\mathcal{C}_{n,\Delta}^{k}$ be the set of caterpillars with degree sequence $\pi_{k,\Delta}=(\underbrace{\Delta,\dots,\Delta}_k,\underbrace{2,\dots,2}_{n-(\Delta-1)k-2},1,\dots,1)$. The graph depicted in Fig.~\ref{fig3} is a caterpillar in $\mathcal{C}_{n,\Delta}^{k}$.
\begin{cor}\label{thm2.5}
Let $T$ be in $\mathbb{T}_{n,\Delta}^k$. Then $aecc_{3}(T)\leqslant \frac{(n-1)(n-(\Delta-2)k-2)+2n}{n}$ with equality if and only if $T\in \mathcal{C}_{n,\Delta}^{k}$.
\end{cor}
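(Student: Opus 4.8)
The plan is to follow the same majorization scheme used for Corollaries~\ref{thm2.3} and~\ref{thm2.4}, now with the extremal degree sequence $\pi_{k,\Delta}$. First I would observe that $\{\mathcal{T}_{n,\pi}:\pi\in\Pi_{k,\Delta}\}$ is a partition of $\mathbb{T}_{n,\Delta}^k$, where $\Pi_{k,\Delta}=\{\pi\in\Pi:\ d_1=\cdots=d_k=\Delta>d_{k+1}\}$, and that $\pi_{k,\Delta}\in\Pi_{k,\Delta}$. A direct count shows that the number of entries of $\pi_{k,\Delta}$ that are at least $2$ equals $k+\bigl(n-(\Delta-1)k-2\bigr)=n-(\Delta-2)k-2$. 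Hence, if $T\in\mathcal{C}_{n,\Delta}^k$, then $T$ is a caterpillar with degree sequence $\pi_{k,\Delta}$ and maximum degree $\Delta\geqslant3$, so Theorem~\ref{thm2.1} yields $aecc_3(T)=\frac{(n-1)(n-(\Delta-2)k-2)+2n}{n}$, which settles the equality case.

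For an arbitrary $T\in\mathbb{T}_{n,\Delta}^k$ with degree sequence $\pi=(d_1,\dots,d_n)$, the heart of the argument is to prove $\pi_{k,\Delta}\preceq\pi$. Since $T$ has exactly $k$ vertices of maximum degree, $d_1=\cdots=d_k=\Delta$ and $d_{k+1}\leqslant\Delta-1$, so $\pi$ and $\pi_{k,\Delta}$ agree on their first $k$ coordinates and both sum to $2(n-1)$; consequently their length-$(n-k)$ tails have the same sum. The tail of $\pi_{k,\Delta}$ takes only the two consecutive values $2$ and $1$, so it is the most balanced non-increasing positive-integer sequence of that length and sum, and is therefore minimal in the majorization order among all such tails, the tail of $\pi$ included. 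Because the first $k$ coordinates coincide, this tail-minimality lifts to the full sequences, giving $\pi_{k,\Delta}\preceq\pi$; the required partial-sum inequalities can also be checked directly as in the proof of Corollary~\ref{thm2.2}. As $\Delta\geqslant3$, Theorem~\ref{thm4.1} applies and gives $aecc_3(T_\pi^*)\leqslant aecc_3(T_{\pi_{k,\Delta}}^*)$, while Theorem~\ref{thm2.1} bounds $aecc_3(T)$ by $aecc_3(T_\pi^*)$; chaining these two inequalities produces the stated bound.

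The equality characterization would be handled exactly as before. If $\pi\neq\pi_{k,\Delta}$, then $T\in\mathbb{T}_{n,\Delta}^k\backslash\mathcal{T}_{n,\pi_{k,\Delta}}$ and the number of entries $\geqslant2$ in $\pi_{k,\Delta}$ strictly exceeds that in $\pi$, so the majorization step is strict by the mechanism in the proof of Theorem~\ref{thm4.1}, i.e. $aecc_3(T_\pi^*)<aecc_3(T_{\pi_{k,\Delta}}^*)$. If instead $\pi=\pi_{k,\Delta}$ but $T\notin\mathcal{C}_{n,\Delta}^k$, then $T$ lies in $\mathcal{T}_{n,\pi_{k,\Delta}}$ without being a caterpillar, so the inequality in Theorem~\ref{thm2.1} is strict. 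Thus equality holds precisely when $T\in\mathcal{C}_{n,\Delta}^k$.

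I expect the only genuine obstacle to be the verification that $\pi_{k,\Delta}$ is the majorization-minimum of $\Pi_{k,\Delta}$: concretely, confirming that the hypothesis $n\geqslant2+k(\Delta-1)$ makes the block of $2$'s nonnegative so that the $\{1,2\}$-valued tail is actually attainable, and checking that the strict ``number of degrees $\geqslant2$'' comparison inherited from Theorem~\ref{thm4.1} transfers verbatim to the present sequences. Everything else reduces to the arithmetic already isolated above.
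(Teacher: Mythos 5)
Your proposal is correct and follows essentially the same route as the paper: partition $\mathbb{T}_{n,\Delta}^k$ by degree sequence, show $\pi_{k,\Delta}\preceq\pi$, chain Theorem~\ref{thm2.1} with Theorem~\ref{thm4.1}, and split the equality analysis into the cases $\pi\neq\pi_{k,\Delta}$ and $\pi=\pi_{k,\Delta}$. The only (minor) difference is that you justify the majorization via the balanced $\{1,2\}$-valued tail being majorization-minimal, where the paper simply invokes the partial-sum computation from Corollary~\ref{thm2.2}; both are valid under the hypothesis $n\geqslant 2+k(\Delta-1)$.
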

\begin{proof}
If $T\in \mathcal{C}_{n,\Delta}^{k}$, then from Theorem \ref{thm2.1} we know that $T$ is a caterpillar with degree sequence $\pi_{k,\Delta}$ and $aecc_3(T)=\frac{(n-1)(n-(\Delta-2)k-2)+2n}{n}$. The equality holds in this case.

Otherwise, let $T$ be any tree in $\mathbb{T}_{n,\Delta}^k\backslash\mathcal{C}_{n,\Delta}^{k}$ and $\pi=(d_1,\dots,d_n)$ be the degree sequence of $T$. As there are exactly $k$ vertices of given maximum degree equal to $\Delta$ in $T$, then $d_1=\ldots =d_k=\Delta$ and $\Delta-1\geqslant d_i\geqslant 1$ for $i=k+1,\dots,n$. Using similar proof, we obtain that $\pi_{k,\Delta}\preceq \pi$. By Theorem \ref{thm2.1} and Theorem \ref{thm4.1},
$$aecc_{3}(T)\leqslant aecc_{3}(T_\pi^*)\leqslant aecc_{3}(T_{\pi_{k,\Delta}}^*)=\frac{(n-1)(n-(\Delta-2)k-2)+2n}{n}.$$
If $\pi\neq \pi_{k,\Delta}$, then $T\in \mathbb{T}_{n,\Delta}^k\backslash \mathcal{T}_{n,\pi_{k,\Delta}}$. We find that the number of degrees greater than $1$ in $\pi_{k,\Delta}$ is strictly larger than that in $\pi$. By the proof of Theorem \ref{thm4.1}, it makes $aecc_{3}(T_\pi^*)< aecc_{3}(T_{\pi_{k,\Delta}}^*)$ hold.
If $\pi =\pi_{k,\Delta}$, then $T \in \mathcal{T}_{n,\pi_{k,\Delta}} \backslash \mathcal{C}_{n,\Delta}^{k}$. By Theorem \ref{thm2.1}, $aecc_{3}(T)<aecc_{3}(T_{\pi}^*)$. So the inequality holds in this case.

Now, we complete the proof.
\end{proof}

\begin{cor}\label{cor4.2}
For given positive integers $n,k,\Delta$ which satisfy $n>3,k\geqslant 1, \Delta>3$ and $n\geqslant n-2-k(\Delta-2)$, then
$$aecc_{3}(T_{\pi_{k,\Delta-1}}^*)> aecc_{3}(T_{\pi_{k,\Delta}}^*).$$
\end{cor}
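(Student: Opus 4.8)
The plan is to avoid any new structural argument and instead read off both quantities from the closed-form expression already obtained in Corollary~\ref{thm2.5}. That corollary gives, for every admissible triple,
\[
aecc_{3}(T_{\pi_{k,\Delta}}^*)=\frac{(n-1)\bigl(n-(\Delta-2)k-2\bigr)+2n}{n}.
\]
My first step is to check that this formula is legitimately applicable to \emph{both} degree sequences $\pi_{k,\Delta}$ and $\pi_{k,\Delta-1}$ under the stated hypotheses. Since $\Delta>3$, we have $\Delta-1\geqslant 3$, so $\pi_{k,\Delta-1}$ still has maximum degree at least $3$ and we remain in the $d_{1}\geqslant 3$ branch of Theorem~\ref{thm2.1} rather than in the path case. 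Moreover the count condition preceding Corollary~\ref{thm2.5} guarantees that the number $n-(\Delta-1)k-2$ of degree-$2$ entries of $\pi_{k,\Delta}$ is nonnegative; the corresponding count for $\pi_{k,\Delta-1}$, namely $n-(\Delta-2)k-2$, is then nonnegative as well, being larger. Hence both sequences are realized by caterpillars and the displayed formula applies to each.

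The second step is a direct substitution and subtraction. Replacing $\Delta$ by $\Delta-1$ in the formula yields
\[
aecc_{3}(T_{\pi_{k,\Delta-1}}^*)=\frac{(n-1)\bigl(n-(\Delta-3)k-2\bigr)+2n}{n},
\]
and subtracting the two expressions makes the $+2n$ summands and the leading $n$'s inside the parentheses cancel, leaving
\[
aecc_{3}(T_{\pi_{k,\Delta-1}}^*)-aecc_{3}(T_{\pi_{k,\Delta}}^*)=\frac{(n-1)\bigl[(\Delta-2)k-(\Delta-3)k\bigr]}{n}=\frac{(n-1)k}{n}.
\]
The final step is to note that $n>3$ gives $n-1>0$ and $k\geqslant 1$ gives $k>0$, so this value is strictly positive, which is precisely the claimed inequality.

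There is no genuinely hard step in this argument: once the formula of Corollary~\ref{thm2.5} is in hand, the result is a one-line computation. The only point that deserves care is the preliminary verification that lowering the maximum degree from $\Delta$ to $\Delta-1$ keeps us within the scope of that corollary---specifically that the new maximum degree is still at least $3$ (so the path case $d_{1}=2$ of Theorem~\ref{thm2.1} is avoided) and that $\pi_{k,\Delta-1}$ remains an admissible degree sequence with a nonnegative number of degree-$2$ vertices. After that, the cancellation is automatic and the sign of $\frac{(n-1)k}{n}$ is immediate.
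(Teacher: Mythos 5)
Your proof is correct, but it takes a different route from the paper's. The paper derives the corollary from its majorization machinery: it observes that $\pi_{k,\Delta-1}\preceq\pi_{k,\Delta}$ and applies Theorem~\ref{thm4.1} to get the weak inequality, then upgrades it to a strict one by noting that the number of entries greater than $1$ differs between the two sequences, so that in the notation of the proof of Theorem~\ref{thm4.1} one has $s<t$ strictly. You bypass majorization entirely and instead substitute both sequences into the closed formula of Corollary~\ref{thm2.5} (equivalently, Theorem~\ref{thm2.1} applied with internal-vertex counts $n-(\Delta-2)k-2$ and $n-(\Delta-3)k-2$, respectively) and subtract, obtaining the exact gap $\frac{(n-1)k}{n}>0$. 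Both arguments ultimately rest on the same formula $\frac{(n-1)s+2n}{n}$, but yours is more direct, avoids having to verify the majorization relation (which the paper asserts without proof), and yields a quantitative strengthening of the statement. Your preliminary checks are also the right ones: that $\Delta-1\geqslant 3$ keeps both sequences in the $d_1\geqslant 3$ branch of Theorem~\ref{thm2.1}, and that the number of degree-$2$ entries remains nonnegative --- for which you correctly fall back on the intended hypothesis $n\geqslant 2+k(\Delta-1)$ from the setup preceding Corollary~\ref{thm2.5}, since the condition $n\geqslant n-2-k(\Delta-2)$ printed in the corollary's statement is vacuous and evidently a typo.
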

\begin{proof}
In the view of \ref{thm2.5}, note that
$$\pi_{k,\Delta}=(\Delta,\dots,\Delta,2,\dots,2,1,\dots,1),\ \ \ \ \ \pi_{k,\Delta-1}=(\Delta-1,\dots,\Delta-1,2,\dots,2,1,\dots,1)
$$
are degree sequences of the caterpillars $T_{\pi_{k,\Delta}^*}$ and $T_{\pi_{k,\Delta-1}}^*$ which maximize the average Steiner 3-eccentricity among all trees having exactly $k$ vertices of maximum degree equal to $\Delta$ and equal to $\Delta-1$, respectively. Since $\pi_{k,\Delta-1}\preceq \pi_{k,\Delta}$, by Theorem \ref{thm4.1}, $aecc_{3}(T_{\pi_{k,\Delta-1}}^*)\geqslant aecc_{3}(T_{\pi_{k,\Delta}}^*).$

Besides, we find that the number of degrees greater than $1$ in $\pi_{k,\Delta-1}$ is strictly larger than that in $\pi_{k,\Delta}$. From the proof of Theorem \ref{thm4.1}, $aecc_{3}(T_{\pi_{k,\Delta-1}}^*)> aecc_{3}(T_{\pi_{k,\Delta}}^*).$
Hence the assertion holds.
\end{proof}
\section{\normalsize Proofs of Theorems \ref{thm2.7} and \ref{thm2.8}}\setcounter{equation}{0}\setcounter{case}{0}
\begin{proof}[\bf{Proof of Theorem \ref{thm2.7}}]
Let $T^0$ be the tree of order $n$ with given segment sequence $l=(l_1,\dots,l_m)$ having the minimal average Steiner 3-eccentricity. If $T^0$ is a generated star, then the theorem holds.

Otherwise, assume that $T^0$ is not a generated star and apply with $\pi$-transformation mentioned in Section 2. Now we proceed according to the following steps: Since $T^0$ is not a generalized star, there must be at least two branch vertices. Choose two branch vertices $u,w$ of $V(T^0)$ so that $P_{uw}$ is a segment. Let $T_u$ (resp.$T_w$) be the connected component of $T^0-E(P_{uw})$ containing $u$ (resp.$w$). Without loss of generality, we assume that $\varepsilon_{T_u}(u)\geq\varepsilon_{T_w}(w)$. Then, let $T^1=T^0-\{wv|v\in N_{T_w}(w)\}+\{uv|v\in N_{T_w}(w)\}$. By Corollary \ref{thm6.1}, we have $aecc_3(T^0)\geqslant aecc_3(T^{1})$.
\begin{figure}
\centering
\psfrag{1}{$a$}
\psfrag{2}{$b$}
\psfrag{3}{$\vdots$}
\psfrag{4}{$v_l$}
\psfrag{5}{$T^{t-1}$}
\psfrag{6}{$T^{t}$}
\includegraphics[width=100mm]{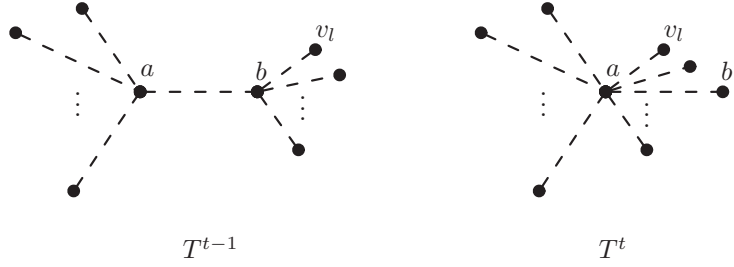} \\
\caption{$T^{t-1}$ and $T^{t}$ in the last but one step}
\label{fig10}
\end{figure}
We could repeatedly apply $\pi$-transformation satisfying the requirements above (choose two branch vertices satisfying the path between them is a segment) on $T^0$ until no further transformation is possible. Without loss of generality, we assume that the process ends in $t>1$ steps and denote $T^i$, for $i=1,\dots,t$, as the tree obtained after the $i_{th}$ transformation. We claim that in the last step we must arrive at a generated star. Suppose on the contrary, let $T^t$ be the tree achieved in the last step and $T^t$ is not a generated star. Then there must be two branch vertices $r,s$ of $V(T^t)$ so that $P_{rs}$ is a segment of $T^t$. Let $T_r$ (resp.$T_s$) be the connected component of $T^t-E(P_{rs})$ containing $r$ (resp.$s$). Without loss of generality, we assume that $\varepsilon_{T_r}(r)\geq\varepsilon_{T_s}(s)$. Now one can do a $\pi$-transformation to obtain a new tree defined as $T^{t+1}=T^{t}-\{sv|v\in N_{T_s}(s)\}+\{rv|v\in N_{T_s}(s)\}$. This contradicts the fact that $T^t$ is the final tree which achieved in the last step.
From Corollary \ref{thm6.1}, we get
$$aecc_3(T^0)\geqslant aecc_3(T^1)\geqslant \dots \geqslant aecc_3(T^{t-1})\geqslant aecc_3(T^t).$$

Note that, in the last but one step, there must be exactly two branch vertices in $T^{t-1}$, written as $a$ and $b$. In fact, if there exists at least three branch vertices in $T^{t-1}$, we need more than one step to transform it into a generated star. That is a contradiction. Similarly, we denote $T_a$ and $T_b$ as the connected components of $a$ and $b$ in $T^{t-1}-E(P_{ab})$. Assume that $\varepsilon_{T_a}(a)\geq\varepsilon_{T_b}(b)$ (shown in Fig~\ref{fig10}). We find that both $T_a$ and $T_b$ are generated stars. We can verify that there must exist a leaf $v_l\in V(T_b)$ in $T^{t-1}$ so that $ecc_3(v_l,T^{t-1})> ecc_3(v_l,T^t)$. Since the length of the longest path starting from $v_l$ in $T^{t-1}$ is strictly larger than that in $T^t$, by Lemma \ref{lem2.2}, it holds.
Besides, according to the proof of Lemma \ref{lem1.5} in \cite{10a}, it gives that the Steiner k-eccentricity of every vertex $v\in V(T^{t-1})$ is not less than that in $T^t$. This assertion deservedly holds when $k=3$. So $ecc_3(v,T^{t-1})\geqslant ecc_3(v,T^t)$ holds for all vertices. Then $aecc_3(T^{t-1})> aecc_3(T^t)$ which implies the inequality holds in this case.

As a result, we say that generalized star with given segment sequence $l$ is the unique tree which has the minimum average Steiner 3-eccentricity among all trees in $\mathscr{T}_n^l$.
\end{proof}

\begin{proof}[\bf{Proof of Theorem \ref{thm2.8}}]
In view of Theorem \ref{thm2.7}, the tree in $\mathscr{T}_{l}$ minimizing the average Steiner 3-eccentricity must be of the form $S(l_1,l_2,\dots,l_m)$ where $(l_1,l_2,\dots,l_m)$ is a segment sequence. Clearly, the tree having $m$ segments will be a generalized star if it minimizes the average Steiner 3-eccentricity.

Let $T$ be the generalized star having the largest index and $u$ be the unique branch vertex called center. Assume that $T\ncong ST_{n,m}$ which implies $l_1-l_m\geqslant 2$. Let $P_i$ be the segment whose length is $l_i$, $i=1,2,\dots,m$ and $P_1=uu_1\ldots u_{l_1}, P_m=uw_1\ldots w_{l_m}.$
Let $T'=T-u_{l_1-1}u_{l_1}+w_{l_m}u_{l_1}$ (shown in Fig~\ref{fig11}). Clearly, $T'$ is a tree of order $n$ having $m$ segments. Let $A=\{3,\dots,m\}$. We will focus on the following possible cases to verify that $aecc_{3}(T)\geq aecc_{3}(T')$.

\begin{case}\label{case111} $|P_2|=l_1.$ \end{case}
\begin{enumerate}[(i)]
\item If $v=u_i$, for $i=1,\dots,l_1-1$. Then the length of the longest path starting from $v$ in both $T$ and $T'$ is $l_1+d_{T}(v,u)$. By lemma \ref{lem2.2}, $ecc_{3}(v,T)=l_1+d_{T}(v,u)+\max_{j\in A}\{l_j, l_1-i\}=l_1+i+\max_{j\in A}\{l_j, l_1-i\}$ and $ecc_{3}(v,T')=l_1+d_{T}(v,u)+\max_{j\in A\backslash\{m\}}\{l_j, l_m+1,l_1-i-1\}=l_1+i+\max_{j\in A\backslash\{m\}}\{l_j, l_m+1,l_1-i-1\}$ which implies $ecc_{3}(v,T)\geq ecc_{3}(v,T')$ holds in this subcase.
\item If $v=u_{l_1}$. Then the length of the longest path starting from $v$ in $T$ is $2l_1$ and the length in $T'$ is $l_1+l_m+1$. By lemma \ref{lem2.2}, $ecc_{3}(v,T)=2l_1+\max_{j\in A}l_j$ and $ecc_{3}(v,T')=l_1+l_m+1+\max_{j\in A\backslash\{m\}}\{l_1-1,l_j\}$ which implies $ecc_{3}(v,T)\geq ecc_{3}(v,T')$ holds in this subcase.
\item If $v\in V(P_2)\backslash \{u\}$. The length of longest path starting from $v$ in $T$ is $d_T(u,v)+l_1$. By lemma \ref{lem2.2}, $ecc_{3}(v,T)=\max_{j\in A}\{2l_1, l_1+l_j+d_T(u,v)\}$ and $ecc_{3}(v,T')=\max\{2l_1-1,d_T(u,v)+l_1+\max_{j\in A\backslash\{m\}}\{l_j-1\}, d_T(u,v)+l_1+l_m, d_T(u,v)+\max_{j\in A\backslash\{m\}}\{l_j\}+\max_{k\in A\backslash\{j,m\}}\{l_k\}\}$. By comparison, $ecc_{3}(v,T)\geq ecc_{3}(v,T')$ also holds in this subcase.
\item If $v\in V(P_i)\backslash \{u\}$ with $i=3,\dots,m-1$. Then, the length of longest path starting from $v$ is $d_T(u,v)+l_1$ in both $T$ and $T'$. By lemma \ref{lem2.2}, $ecc_{3}(v,T)=d_T(u,v)+2l_1$ and $ecc_{3}(v,T')=d_T(u,v)+l_1+\max_{j\in A\backslash\{m\}}\{l_j,l_1-1\}$. It shows that $ecc_{3}(v,T)\geq ecc_{3}(v,T')$ in this subcase.
\item If $v\in V(P_m)\backslash \{u\}.$ The length of longest path starting from $v$ is $d_T(u,v)+l_1$ in both $T$ and $T'$. Through calculation in lemma \ref{lem2.2}, we get $ecc_{3}(v,T)=d_T(u,v)+2l_1$ and $ecc_{3}(v,T')=d_T(u,v)+l_1+\max_{j\in A\backslash\{m\}}\{l_j, l_1-1\}$ which implies $ecc_{3}(v,T)\geq ecc_{3}(v,T')$ holds in this subcase.
\item If $v= u$. Then, the length of longest path starting from $v$ is $l_1$ in both $T$ and $T'$. By lemma \ref{lem2.2}, $ecc_{3}(v,T)=2l_1$ and $ecc_{3}(v,T')=l_1+\max_{j\in A\backslash\{m\}}\{l_j,l_1-1\}$. We obtain that $ecc_{3}(v,T)\geq ecc_{3}(v,T')$ in this subcase.
\end{enumerate}
In sum, $aecc_3(T)-aecc_3(T')=\sum_{v\in V(T)}[ecc_{3}(v,T)-ecc_{3}(v,T')]\geqslant 0$. The assertion holds in Case \ref{case111}.

\begin{case}\label{case112} $l_m < l_2 < l_1.$ \end{case}
\begin{enumerate}[(i)]
\item If $v=u_{l_1}$. The length of longest path starting from $v$ in $T$ is $l_1+l_2$, and the length in $T'$ is $l_1+l_m$. By lemma \ref{lem2.2}, $ecc_{3}(v,T)=l_1+l_2+\max_{j\in A}l_j$ and $ecc_{3}(v,T')=l_1+l_2+l_m$ which makes $ecc_{3}(v,T)\geqslant ecc_{3}(v,T')$ holds in this subcase.
\item If $v\in V(P_2)\backslash \{u\}$. The length of longest path starting from $v$ in $T$ is $d_T(u,v)+l_1$. By lemma \ref{lem2.2}, $ecc_{3}(v,T)=\max_{j\in A}\{l_1+l_2,l_1+l_j+d_T(u,v)\}$ and $ecc_{3}(v,T')=\max_{j\in A\backslash\{m\}}\{l_1+l_2-1,l_1-1+l_j+d_T(u,v),l_1+l_m+d_T(u,v)\}$ which makes $ecc_{3}(v,T)\geqslant ecc_{3}(v,T')$ holds in this subcase.
\item If $v\in V(P_i)\backslash \{u\}$ with $i\in\{3,\dots,m\}$. The length of longest path starting from $v$ in $T$ is $d_T(u,v)+l_1$ and the length in $T'$ is $d_T(u,v)+l_1-1$. By lemma \ref{lem2.2}, $ecc_{3}(v,T)=d_T(u,v)+l_1+l_2$ and $ecc_{3}(v,T')=d_T(u,v)+l_1+l_2-1$. By comparison, we know that $ecc_{3}(v,T)=ecc_{3}(v,T')+1$. So $ecc_{3}(v,T)\geq ecc_{3}(v,T')$ also holds in this subcase.
\item If $v=u$. The length of longest path starting from $v$ in $T$ is $l_1$ and the length in $T'$ is $l_1-1$. Then by lemma \ref{lem2.2}, $ecc_{3}(v,T)=l_1+l_2$ and $ecc_{3}(v,T')=l_1+l_2-1$ which implies $ecc_{3}(v,T)=ecc_{3}(v,T')+1$. So $ecc_{3}(v,T)\geq ecc_{3}(v,T')$ also holds in this subcase.
\item If $v\in V(P_1)\backslash \{u,u_{l_1}\}$, then the length of longest path starting from $v$ is $d_T(u,v)+l_2$ in both $T$ and $T'$. By lemma \ref{lem2.2}, we obtain
    \begin{align}
    ecc_{3}(v,T)&=\max_{j\in A}\{l_2+l_j+i, l_1+l_2\}.\label{eq1121}\\
    ecc_{3}(v,T')&=\max_{j\in A\backslash\{m\}}\{l_1+l_2-1,l_2+l_j+i,l_2+l_m+i+1\}.\label{eq1122}
    \end{align}\end{enumerate}
\begin{figure}
\centering
\psfrag{1}{$u$}
\psfrag{2}{$u_1$}
\psfrag{3}{$u_{l_1-1}$}
\psfrag{4}{$u_{l_1}$}
\psfrag{5}{$w_1$}
\psfrag{6}{$w_{l_m}$}
\psfrag{8}{$T$}
\psfrag{9}{$T'$}
\includegraphics[width=75mm]{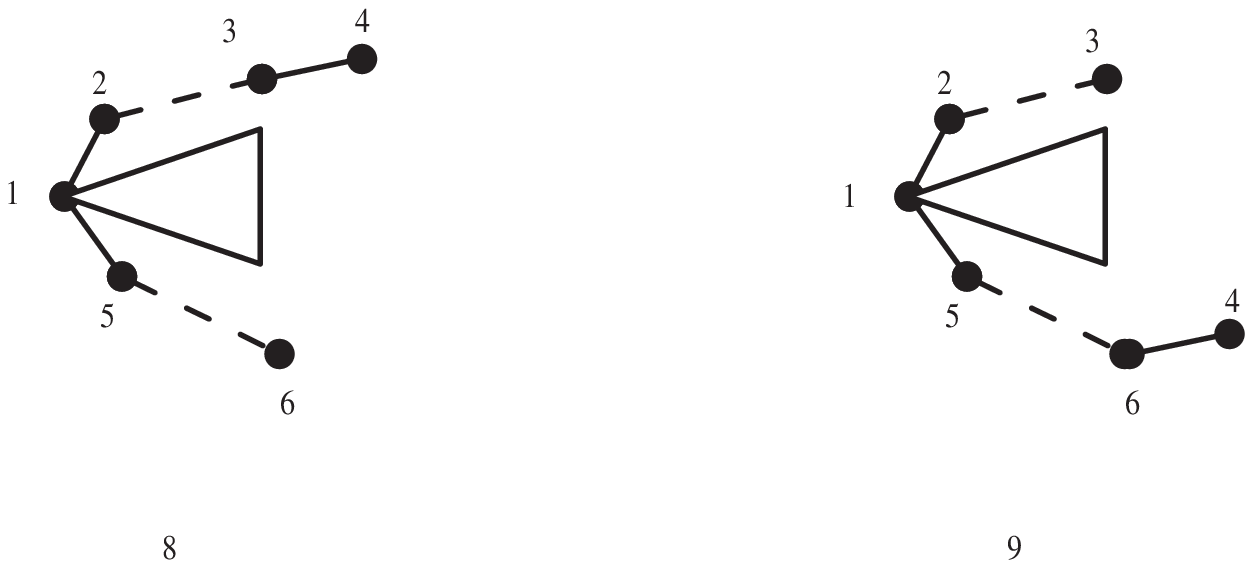} \\
\caption{$T$ and $T'$}
\label{fig11}
\end{figure}
For situation (\romannumeral5), if $m>3$ and $l_3>l_m$, then $l_3=\max_{j\in A}l_j$. By the equation (\ref{eq1121}) and (\ref{eq1122}), we get
\begin{align*}
\begin{split}
ecc_{3}(u_i,T)=\left\{
                \begin{array}{ll}
                l_1+l_2, &\hbox{if $i\leqslant l_1-l_3;$}\\
                l_2+l_3+i, &\hbox{otherwise.}
                \end{array}
                \right.
\end{split}\\
\begin{split}
ecc_{3}(u_i,T')=\left\{
                \begin{array}{ll}
                l_1+l_2-1, &\hbox{if $i\leqslant l_1-l_3-1;$}\\
                l_2+l_3+i, &\hbox{otherwise.}
                \end{array}
                \right.
\end{split}
\end{align*}
It implies that $ecc_{3}(u_i,T)=ecc_{3}(u_i,T')+1$ for $1\leqslant i\leqslant l_1-l_3-1$ and $ecc_{3}(u_i,T)=ecc_{3}(u_i,T')-1$ for $l_1-l_3 \leqslant i\leqslant l_1-1$. Since $ecc_{3}(v,T)\geqslant ecc_{3}(v,T')$ in other situations. Together with situation (\romannumeral3), we get
\begin{align*}
aecc_3(T)-aecc_3(T')&=\sum_{v\in V(T)}[ecc_{3}(v,T)-ecc_{3}(v,T')]\\
&\geqslant \sum_{v\in V(P_1\cup P_3)\backslash\{u,u_{l_1}\}}[ecc_{3}(v,T)-ecc_{3}(v,T')]\\
&\geqslant l_3-l_3\\
&= 0
\end{align*}
If $m>3$ and $l_3=l_m$ or $m=3$, then $l_m=\max_{j\in A}l_j$. By the equation (\ref{eq1121}) and (\ref{eq1122}), we get
\begin{align*}
\begin{split}
ecc_{3}(u_i,T)&=\left\{
                \begin{array}{ll}
                l_1+l_2, &\hbox{if $i\leqslant l_1-l_m;$}\\
                l_2+l_m+i, &\hbox{otherwise.}
                \end{array}
                \right.
\end{split}\\
\begin{split}
ecc_{3}(u_i,T')&=\left\{
                \begin{array}{ll}
                l_1+l_2-1, &\hbox{if $i\leqslant l_1-l_m-1;$} \\
                l_2+l_m+i+1, &\hbox{otherwise.}
                \end{array}
                \right.
\end{split}
\end{align*}
It implies $ecc_{3}(u_i,T)=ecc_{3}(u_i,T')+1$ for $1\leqslant i\leqslant l_1-l_m-1$ and $ecc_{3}(u_i,T)=ecc_{3}(u_i,T')-1$ for $l_1-l_m \leqslant i\leqslant l_1-1$. Together with situation (\romannumeral3), we get
\begin{align*}
aecc_3(T)-aecc_3(T')&=\sum_{v\in V(T)}[ecc_{3}(v,T)-ecc_{3}(v,T')]\\
&\geqslant \sum_{v\in V(P_1\cup P_m)\backslash\{u,u_{l_1}\}}[ecc_{3}(v,T)-ecc_{3}(v,T')]\\
                                            &\geqslant l_m-l_m\\
                                            &=0
\end{align*}
Then, $aecc_{3}(v,T)\geq aecc_{3}(v,T')$ holds in Case \ref{case112}.

\begin{case}\label{case113} $l_m=l_2<l_1.$ \end{case}
\begin{enumerate}[(i)]
\item If $v=u_{l_1}$. The length of longest path starting from $v$ is $l_1+l_2$ in both $T$ and $T'$. Then by lemma \ref{lem2.2}, $ecc_{3}(v,T)=l_1+2l_2=ecc_{3}(v,T')$.
\item If $v\in V(P_i)\backslash \{u\}$ with $i\in\{2,\dots,m-1\}$. The length of longest path starting from $v$ in $T$ is $d_T(u,v)+l_1$. Then by lemma \ref{lem2.2}, $ecc_{3}(v,T)=l_1+l_2+d_T(u,v)=ecc_{3}(v,T')$.
\item If $v\in V(P_m)\backslash \{u\}$. The length of longest path starting from $v$ in $T$ is $d_T(u,v)+l_1$ and the length in $T'$ is $d_T(u,v)+l_1-1$. By lemma \ref{lem2.2}, $ecc_{3}(v,T)=l_1+l_2+d_T(u,v)$ and $ecc_{3}(v,T')=l_1+l_2+d_T(u,v)-1$. We know that $ecc_{3}(v,T)=ecc_{3}(v,T')+1$ which implies $ecc_{3}(v,T)\geqslant ecc_{3}(v,T')$ holds.
\item If $v=u$. The length of longest path starting from $v$ in $T$ is $l_1$. Then by lemma \ref{lem2.2}, $ecc_{3}(v,T)=l_1+l_2=ecc_{3}(v,T')$.
\item If $v\in V(P_1)\backslash \{u,u_{l_1}\}$, then the length of longest path starting from $v$ in $T$ is $d_T(u,v)+l_2$ and the length in $T'$ is $d_T(u,v)+l_2+1$. By lemma \ref{lem2.2},
    \begin{align}
    ecc_{3}(v,T)&=\max\{l_1+l_2,2l_2+i\}.\label{eq1131}\\
    ecc_{3}(v,T')&=\max\{l_1+l_2,2l_2+i+1\}.\label{eq1132}
    \end{align}\end{enumerate}
 For situation (\romannumeral5). By the equation (\ref{eq1131}) and (\ref{eq1132}), we obtain that
\begin{align*}
\begin{split}
ecc_{3}(u_i,T)&=\left\{
                \begin{array}{ll}
                l_1+l_2, \hbox{if $i\leqslant l_1-l_2$;}\\
                2l_2+i,  \hbox{otherwise.}
                 \end{array}
                \right.
\end{split}\\
\begin{split}
ecc_{3}(u_i,T')&=\left\{
                \begin{array}{ll}
                l_1+l_2, \hbox{if $i\leqslant l_1-l_2-1$;}\\
                2l_2+i+1, \hbox{otherwise.}
                 \end{array}
                \right.
\end{split}
\end{align*}
It implies that $ecc_{3}(u_i,T)=ecc_{3}(u_i,T')$ for $1\leqslant i\leqslant l_1-l_2-1$ and $ecc_{3}(u_i,T)=ecc_{3}(u_i,T')-1$ for $l_1-l_2 \leqslant i\leqslant l_1-1$. Together with situation (\romannumeral3), we get
\begin{align*}
aecc_3(T)-aecc_3(T')&=\sum_{v\in V(T)}[ecc_{3}(v,T)-ecc_{3}(v,T')]\\
&= \sum_{v\in V(P_1\cup P_m)\backslash\{u\}}[ecc_{3}(v,T)-ecc_{3}(v,T')]\\
&=l_m-l_2\\
&=0
\end{align*}
which implies that $aecc_{3}(v,T)=aecc_{3}(v,T')$ holds in Case \ref{case113}.

As a result, $aecc_{3}(v,T)\geqslant aecc_{3}(v,T')$ holds for all cases. Since any tree $T\ncong ST_{n,m}$ which has $m$ segments could be transformed into a generalized star by the method above, meanwhile, the value of average Steiner 3-eccentricity will not increase. Then, we know that the generalized star has the minimal average Steiner 3-eccentricity in $\mathscr{T}_{n,m}$. We complete the proof of Theorem \ref{thm2.8}.
\end{proof}

\section{\normalsize Concluding remarks}
We considered the extremal problems with respect to the average Steiner 3-eccentricity among trees with given degree sequence and segment sequence. A lot of work still has to be done on the average Steiner k-eccentricity for $k\geq3$.

\end{document}